\documentclass[11pt, reqno]{amsart}
\usepackage{amsmath,amssymb,amsthm,amsfonts,verbatim}
\usepackage{microtype}
\usepackage[all,2cell]{xy}
\usepackage{mathtools}
\usepackage{graphicx}
\usepackage{pinlabel}
\usepackage{hyperref}
\usepackage{mathrsfs}
\usepackage{color}
\usepackage[dvipsnames]{xcolor}
\usepackage{enumitem}
\usepackage{cite}
\usepackage{soul}

\allowdisplaybreaks

\CompileMatrices

\usepackage[top=1.2in,bottom=1.2in,left=1in,right=1in]{geometry}

\usepackage{hyperref} 
\hypersetup{          
	colorlinks=true, breaklinks, linkcolor=[RGB]{51 102 204}, filecolor=Orchid, urlcolor=[RGB]{51 102 204},
	citecolor=Orchid, linktoc=all, }
\usepackage[nameinlink]{cleveref}

\theoremstyle{plain}
\newtheorem{theorem}{Theorem}[section]
\newtheorem{proposition}[theorem]{Proposition}
\newtheorem{lemma}[theorem]{Lemma}

\newtheorem{corollary}[theorem]{Corollary}

\theoremstyle{definition}

\newtheorem{remark}[theorem]{Remark}

\newcommand{\ZZ}{\mathbb{Z}}
\newcommand{\QQ}{\mathbb{Q}}

\DeclareMathOperator{\Diff}{Diff}
\DeclareMathOperator{\Sp}{Sp}

\DeclareMathOperator{\Mod}{Mod}

\DeclareMathOperator{\Aut}{Aut}
\DeclareMathOperator{\Out}{Out}

\DeclareMathOperator{\Sym}{Sym}

\counterwithin{figure}{section}
\counterwithin{equation}{section}

\title{Abelianization of Symmetric Mapping Class Groups}
\author{Xiyan Zhong}

\begin{document}
\maketitle
\begin{abstract}
Let $\widetilde{S}\to S$ be an unbranched regular $p$-fold cyclic cover of a closed orientable surface $S$ of genus $g$. Two natural groups are associated to this cover. The first is the centralizer $\Mod(\widetilde{S},\sigma)$ of a chosen generator $\sigma$ of the deck transformation group in $\Mod(\widetilde{S})$. The second is the finite-index subgroup $\Mod(S,[\beta])$ of $\Mod(S)$ consisting of mapping classes that fix the nonzero class $[\beta]\in H_1(S;\mathbb Z/p\mathbb Z)$ corresponding to the cover. For $p=2$, Sato \cite{sato2} computed the abelianizations of these groups. We compute their abelianizations for every odd prime $p$ and show that they exhibit a splitting phenomenon different from the case $p=2$. We also construct explicit abelianization maps using Morita's crossed homomorphism and the Prym representation.
\end{abstract}

\section{Introduction}
Let $S=S_{g,r}^n$ be an orientable surface of genus $g\ge 3$ with $r$ punctures and $n$ boundary components. We omit $r$ or $n$ from the notation whenever it is $0$. The mapping class group of $S$ is \[\Mod(S)=\pi_0(\Diff^+_{\partial}(S))\] 
where $\Diff^+_{\partial}(S)$ denotes the group of orientation-preserving homeomorphisms of $S$ that fixes the boundary pointwise. 

Finite-index subgroups of the mapping class group play an important role in low-dimensional topology and geometric group theory, but many of their basic properties remain not fully understood. Two major open problems concern finite-index subgroups of $\Mod(S)$. The first \cite[Conjecture~1]{Ivanov} is whether $\Mod(S)$ has the congruence subgroup property, i.e.\ whether every finite-index subgroup contains a congruence subgroup
\[\text{Ker}(\Mod(S)\to \Out(\pi_1(S)/N)\] 
where $N\vartriangleleft \pi_1(S)$ is a finite-index characteristic subgroup. The second is the well-known Ivanov conjecture \cite[Question~7]{Ivanov}, asserting that every finite-index subgroup has finite abelianization. Although this remains open even for congruence subgroups, it is known for finite-index subgroups containing the Torelli group by McCarthy \cite{McCarthy} (for $S$ closed) and Hain \cite{Hain}, and more generally for finite-index subgroups containing sufficiently deep terms of the Johnson filtration by Ershov-He \cite{ErshovHe}.


From now on, we assume that $S$ has at most one puncture or one boundary component. 

In this paper, we study the finite-index subgroup
\[\Mod(S,[\beta])\coloneq \text{Stab}_{\Mod(S)}([\beta]),\quad [\beta]\in H_1(S;\ZZ/p\ZZ)^*,\]
where $p$ is a prime. This subgroup arises naturally from cyclic covering spaces. The nonzero homology class $[\beta]$ determines an unbranched cyclic $p$-fold cover
\[\widetilde{S}\to S\]
via the homomorphism
\[\pi_1(S)\to \ZZ/p\ZZ,\quad
\gamma\mapsto \widehat{i}(\gamma,\beta)\bmod p,\]
where $\beta$ is a simple closed curve representing $[\beta]$, and $\widehat{i}$ denotes the algebraic intersection number. By covering space theory, $\Mod(S,[\beta])$ is precisely the subgroup of $\Mod(S)$ consisting of mapping classes that admit lifts to $\widetilde{S}$ which commute with the deck transformation group. The induced action on the homology of the cover gives rise to the Prym representation
\[\Mod(S,[\beta])\longrightarrow \Aut(H_1(\widetilde{S};\QQ))/\text{deck transformations}\]
whose image was determined by Looijenga \cite{PrymRep}. On the algebraic-geometric side, this representation is closely related to Prym varieties \cite{PrymVariety}.

Since $\Mod(S,[\beta])$ contains the Torelli subgroup, its abelianization is finite by the results above. A natural question is therefore to determine this finite abelian group explicitly. For $p=2$, Sato \cite[Theorem 0.2]{sato2} determined the abelianization of $\Mod(S,[\beta])$ for $g \ge 4$. Our first main theorem determines the abelianization of $\Mod(S,[\beta])$ for every odd prime $p$.
\begin{theorem}\label{thm: Abel Mod beta}
Let $g\ge 4$ and $p$ be an odd prime number. Then
\[H_1(\Mod(S_g,[\beta]);\ZZ)\cong\begin{cases}
         \ZZ/p\ZZ & \text{ if } g \not\equiv 1 \pmod p; \\
        \ZZ/p\ZZ \oplus \ZZ/p\ZZ  & \text{ if } g \equiv 1 \pmod p.
    \end{cases}\]
If $S=S_g^1$ or $S_{g,1}$, then
\[H_1(\Mod(S,[\beta]);\ZZ)\cong \ZZ/p\ZZ \oplus \ZZ/p\ZZ.\]
\end{theorem}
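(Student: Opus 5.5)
We will split the computation of $H_1(\Mod(S,[\beta]);\ZZ)$ into a lower bound, coming from explicit homomorphisms to $\ZZ/p\ZZ$, and a matching upper bound, coming from a generating set.

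For the upper bound, the natural tool is the extension
\[1\to \mathcal{I}(S)\to \Mod(S,[\beta])\to \Sp(2g,\ZZ)_{[\beta]}\to 1,\]
where $\Sp(2g,\ZZ)_{[\beta]}$ is the stabilizer of $[\beta]\in H_1(S;\ZZ/p\ZZ)$. The five-term exact sequence then gives
\[H_1(\mathcal{I}(S))_{\Mod(S,[\beta])}\to H_1(\Mod(S,[\beta]))\to H_1(\Sp(2g,\ZZ)_{[\beta]})\to 0.\]
\begin{enumerate}[label=(\roman*)]
\item By Johnson's work, $H_1(\mathcal{I}(S);\ZZ)$ is $\wedge^3H$ (or $\wedge^3H/H$ in the closed case) plus $2$-torsion. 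Since $p$ is odd and $\Mod(S,[\beta])$ is generated by Dehn twists whose commutators act as transvections, I expect the coinvariants to be killed to at most a $p$-part. That $p$-part should come from contracting $\wedge^3 H\to H\to H\otimes \ZZ/p$ and then pairing with $[\beta]$.
\item For the quotient, choose a symplectic basis with $[\beta]=b_1$ mod $p$. Then $\Sp(2g,\ZZ)_{[\beta]}$ contains the level-$p$ congruence subgroup $\Gamma(p)$, and the quotient by $\Gamma(p)$ is the affine-type stabilizer $\ZZ/p^{2g-1}\rtimes \Sp(2g-2,\mathbb{F}_p)$ of a vector in $\mathbb{F}_p^{2g}$.
\item Using $H_1(\Gamma(p))\cong\mathfrak{sp}_{2g}(\mathbb{F}_p)$ for $p$ odd (Perron, Sato), I would compute the coinvariants under the stabilizer. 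Since $\mathfrak{sp}$ is self-dual, this reduces to invariants: the stabilizer of $b_1$ fixes essentially one line in $\mathfrak{sp}_{2g}(\mathbb{F}_p)\cong \Sym^2$, namely $b_1^2$. This should yield one $\ZZ/p$.
\end{enumerate}
Combining, I expect $H_1(\Mod(S,[\beta]))$ to be an elementary abelian $p$-group of rank at most $2$. One generator should be detected by the Prym/symplectic side and one by the Johnson-type side.

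For the lower bound, the abstract suggests constructing the two maps concretely.
\begin{enumerate}[label=(\roman*)]
\item The first map comes from the Prym representation. Restrict to the action on $H_1(\widetilde S)$ as a $\ZZ[\ZZ/p]$-module. Taking a determinant over $\ZZ[\zeta_p]$ on the $\zeta_p$-eigenspace, reduced modulo $(1-\zeta_p)$, should give a surjection to $\ZZ/p$. Equivalently, one can use the symplectic-side class above; a Dehn twist about a curve $\gamma$ with $\widehat i(\gamma,\beta)\ne 0$ should map to a generator.
\item The second map comes from Morita's crossed homomorphism $k:\Mod(S)\to H^1(S;\ZZ)$ (or $H$) in the punctured or boundary case. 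Restricted to $\Mod(S,[\beta])$, the composite $f\mapsto \langle k(f),[\beta]\rangle \bmod p$ becomes a genuine homomorphism, because the stabilizer fixes $[\beta]$ mod $p$ and the crossed term dies.
\end{enumerate}

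In the closed case, $k$ is only defined modulo the image of $(2-2g)H$, so the pairing is well defined mod $\gcd(p,2g-2)$. This explains the dichotomy: for $p$ odd, $p\mid 2g-2$ iff $g\equiv 1\pmod p$. When $g\not\equiv 1$, the Euler class relation should force the second class to vanish, and I would show this via the lantern/chain relation reproducing the $2-2g$ factor. Independence of the two maps will be checked on explicit elements, such as a twist about a curve disjoint from $\beta$ and a bounding-pair map.

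The main obstacle is the upper bound in the coinvariant computation, specifically showing that the Johnson part contributes exactly $\ZZ/p$ (or $0$ in the closed case with $g\not\equiv1$) and nothing more. I would try first to avoid spectral sequences by finding an explicit generating set of $\Mod(S,[\beta])$: Dehn twists about curves $\gamma$ with $\widehat i(\gamma,\beta)\equiv 0$, $p$-th powers of the other twists, and conjugates. Using lantern relations, I would then show directly that all generators are $p$-torsion and that the abelianization is generated by two classes.
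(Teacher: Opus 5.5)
Your overall architecture is sound, and in one respect leaner than the paper's. An upper bound from the five-term sequence of $1\to\mathcal{I}(S)\to\Mod(S,[\beta])\to\Sp_{2g}(\ZZ)^{[\beta]}\to 1$, combined with an explicit surjection onto $\ZZ/p\ZZ\oplus\ZZ/p\ZZ$ (resp.\ $\ZZ/p\ZZ$), already determines the group. So you would not need the injectivity half of Proposition~\ref{prop: image of H1 Torelli}, nor the machinery behind Corollary~\ref{cor: surjectivity of Level P to Beta}. Your second map $f\mapsto\widehat{i}(\kappa(\widetilde f),[\beta])$ and your $(2-2g)$ explanation of the dichotomy are exactly the paper's.

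Your first map, however, fails as proposed:
\begin{itemize}
\item On the $\zeta_p$-isotypic part the Prym determinant takes values in $\langle\zeta_p\rangle$, and $\zeta_p\equiv 1\pmod{1-\zeta_p}$. So its reduction is identically trivial.
\item Even unreduced, it cannot see the symplectic class. $T_\alpha^p$ with $\widehat{i}(\alpha,\beta)=1$ lifts to $T_{\widetilde\alpha}$, which acts trivially on that isotypic part. For $S_g$ with $g\not\equiv1\pmod p$ the determinant is not even well defined, since $\sigma$ contributes $\zeta_p^{2g-2}$.
\end{itemize}
You need an explicit character such as $\chi(F)=\frac1p\widehat{i}(F(b),b)\bmod p$ on $\Sp_{2g}(\ZZ)^{[\beta]}$. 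Your computation (iii) only bounds $H_1(\Sp_{2g}(\ZZ)^{[\beta]};\ZZ)$ from above.

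Your test elements are also wrong:
\begin{itemize}
\item A twist $T_\gamma$ with $\widehat{i}(\gamma,\beta)\not\equiv0$ is not in $\Mod(S,[\beta])$.
\item When $T_\gamma$ is in it, $p\mid\widehat{i}(\gamma,\beta)$, so $\chi(T_\gamma)=\pm\widehat{i}(\gamma,\beta)^2/p\equiv0$.
\item A twist about a curve disjoint from $\beta$ is killed by both $\chi$ and $\psi$.
\end{itemize}
What works is $T_\alpha^p\mapsto(1,0)$ together with the bounding pair map $T_\alpha T_{\alpha'}^{-1}\mapsto(0,2)$, using that $2$ is a unit mod $p$.

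On the upper-bound side, two inputs are missing.
\begin{itemize}
\item Johnson's $H_1(\mathcal{I}(S);\ZZ)$ has a $2$-torsion (Birman--Craggs) part. $\Sp_{2g}(\ZZ)^{[\beta]}$ acts on it through $\Sp_{2g}(\ZZ/2\ZZ)$, onto which it surjects, so the oddness of $p$ tells you nothing about its coinvariants. You must show separately that its image in $H_1(\Mod(S,[\beta]);\ZZ)$ vanishes. The paper does this by factoring through $H_1(\Mod(S,p);\ZZ)$, where the image of the Torelli group is $p$-torsion (Sato, Putman).
\item Bounding $H_1(\Sp_{2g}(\ZZ)^{[\beta]};\ZZ)$ by $\ZZ/p\ZZ$ also requires $H_1(\Sp_{2g}(\ZZ/p\ZZ)^{[\beta]};\ZZ)=0$. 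This holds because the normal subgroup of order $p^{2g-1}$ is a Heisenberg group, not the abelian group your notation suggests. Its center lies in its commutator subgroup because $2$ is invertible mod $p$. If it were abelian, that central $\ZZ/p\ZZ$ would survive in the $\Sp_{2g-2}$-coinvariants.
\end{itemize}

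Finally, the facts that carry your bound are only asserted:
\begin{itemize}
\item $(\wedge^3H)_{\Sp_{2g}(\ZZ)^{[\beta]}}$ is cyclic of order dividing $p$, generated by $a_1\wedge a_2\wedge b_2$;
\item $a_1\wedge\sum_i a_i\wedge b_i$ maps to $(g-1)[a_1\wedge a_2\wedge b_2]$, which kills this class when $g\not\equiv1\pmod p$.
\end{itemize}
Both follow from a short transvection computation (Proposition~\ref{prop: coinvariants}). That is far easier than the generating-set and lantern-relation route you propose.
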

The main difference between our result for odd primes and Sato's result for $p=2$ is that every occurrence of $\ZZ/p\ZZ\oplus\ZZ/p\ZZ$ in our theorem is replaced by $\ZZ/4\ZZ$ in the case $p=2$. To establish the splitting in the $\ZZ/p\ZZ\oplus\ZZ/p\ZZ$ cases, we explicitly construct a surjection (see \eqref{eq: abel 1})
\[\Mod(S,[\beta])\longrightarrow \ZZ/p\ZZ\oplus\ZZ/p\ZZ\] using Morita's crossed homomorphism \cite[Section 6]{MoritaI} from $\Mod(S_{g,1})$ to $H_1(S_g;\ZZ)$.

The distinction between the abelianizations of $\Mod(S_g,[\beta])$ in the cases $g\not\equiv1\pmod p$ and $g\equiv1\pmod p$ is determined by whether the image of the Torelli subgroup in $H_1(\Mod(S_g,[\beta]);\ZZ)$ is trivial or nontrivial. Moreover, the explicit abelianization map constructed in the proof of Theorem~\ref{thm: Abel Mod beta} gives the following coset representatives for the commutator subgroup of $\Mod(S,[\beta])$.
\begin{corollary}\label{cor: coset beta}
Let $\beta$ be a simple closed curve in $S$, and let $\alpha$ and $\alpha'$ be non-isotopic, disjoint, homologous simple closed curves that each intersect $\beta$ once and together separate $S$. Then the following elements form a complete set of coset representatives for the commutator subgroup of $\Mod(S,[\beta])$:
\[
(T_\alpha^p)^i(T_\alpha T_{\alpha'}^{-1})^j,\qquad 0\le i,j\le p-1,
\]
except in the case where $S=S_g$ and $g\not\equiv1\pmod p$, in which case the representatives are
\[
{(T_\alpha^p)^i,\quad 0\le i\le p-1}.
\]
\end{corollary}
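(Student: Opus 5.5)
Since Theorem~\ref{thm: Abel Mod beta} already fixes the order of $H_1(\Mod(S,[\beta]);\ZZ)$, the corollary reduces to a purely combinatorial statement: the listed elements have pairwise distinct images in the abelianization. There are $p^2$ listed elements in the split cases and $p$ in the case $S=S_g$, $g\not\equiv 1\pmod p$, which matches the order of the group in each case. So it suffices to show that the images of $T_\alpha^p$ and $T_\alpha T_{\alpha'}^{-1}$ generate $H_1(\Mod(S,[\beta]);\ZZ)$ and are independent of order $p$ (respectively, that $T_\alpha^p$ alone generates a group of order $p$). Equivalently, I will find a surjection from $\Mod(S,[\beta])$ onto the abelianization and evaluate it on these two elements.

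First I check that both elements lie in $\Mod(S,[\beta])$. Since $\widehat i(\alpha,\beta)=\pm1$, the Dehn twist $T_\alpha$ sends $[\beta]\mapsto[\beta]\pm[\alpha]$, so $T_\alpha^p$ fixes $[\beta]$ mod $p$. The curves $\alpha,\alpha'$ are homologous with matching orientations, so $T_\alpha T_{\alpha'}^{-1}$ acts trivially on homology; it is a bounding pair map and lies in the Torelli group. Next I use the explicit abelianization map \eqref{eq: abel 1}, built from Morita's crossed homomorphism together with the Prym-type component. This map is an isomorphism onto $\ZZ/p\oplus\ZZ/p$ in the split cases, and its first factor alone gives the abelianization when $g\not\equiv1\pmod p$. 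I evaluate it on the two elements. The component detecting $T_\alpha^p$ should be the one that counts powers of twists about curves meeting $\beta$ once. I expect it to send $T_\alpha^p$ to a generator, with the natural normalization being $1$. I also expect it to vanish on the Torelli group, or at least on $T_\alpha T_{\alpha'}^{-1}$, presumably because such a bounding pair map lifts to the cover as a product of twists whose contributions cancel.

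The other component comes from Morita's crossed homomorphism $k\colon\Mod(S_{g,1})\to H_1(S_g;\ZZ)$, paired with $[\beta]$ and reduced mod $p$. For the bounding pair map I will use the Johnson homomorphism description of $k$ on Torelli: $k$ restricted to Torelli is the contraction of $\tau$. For a bounding pair of genus $h$ (the genus of the subsurface cut off by $\alpha\cup\alpha'$), this contraction is a multiple of $[\alpha]$, roughly $(2h-\text{const})[\alpha]$ or $(g-1)$-type multiples depending on the punctured/closed normalization. Pairing with $\beta$ gives a scalar that is nonzero mod $p$ exactly in the cases the theorem lists as split. In the closed case with $g\not\equiv1\pmod p$ the Torelli image is trivial, and then only $T_\alpha^p$ matters. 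I also need the value of this component on $T_\alpha^p$, to get a triangular matrix. Any value is fine as long as the $2\times2$ matrix of images is invertible mod $p$, and triangularity gives exactly that.

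The main obstacle is the choice of the genus $h$ of the bounding pair. The statement allows any such $\alpha,\alpha'$, so the computed coefficient must be independent of $h$ mod $p$, or at least always nonzero. I would first try to show via the lantern relation, or by conjugation inside $\Mod(S,[\beta])$, that all bounding pair maps of this type with $\widehat i(\alpha,\beta)=1$ have the same image in the abelianization up to a unit. If that fails, I would compute the contraction carefully, using that in the closed case $k$ is only defined modulo $(2-2g)$ times a class. That ambiguity is precisely where the condition $g\equiv1\pmod p$ enters.
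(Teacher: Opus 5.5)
Your reduction matches the paper's. Theorem~\ref{thm: Abel Mod beta} fixes the order, so it suffices to evaluate the explicit map $(\chi,\psi)$ of \eqref{eq: abel 1} on $T_\alpha^p$ and $T_\alpha T_{\alpha'}^{-1}$ and get an invertible triangular matrix. The first component is not ``Prym-type''. It is $\chi(f)=\frac1p\widehat{i}(f\cdot[\beta],[\beta])$, which factors through $\Sp_{2g}(\ZZ)^{[\beta]}$, so it kills the Torelli group for trivial reasons, and $\chi(T_\alpha^p)=\pm1$ because $\widehat{i}(\alpha,\beta)=\pm1$. No argument about lifts to the cover is needed.

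The gap is the decisive step, the value $\psi(T_\alpha T_{\alpha'}^{-1})$, which you never compute. Your claim that it is nonzero mod $p$ ``exactly in the cases the theorem lists as split'' mixes up two different things. The condition $g\equiv1\pmod p$ is what makes $\psi$ well defined on $\Mod(S_g,[\beta])$, and it decides (via the quotient by $H$ in Proposition~\ref{prop: coinvariants}) whether the Torelli image survives. It says nothing about whether this particular number is a unit.

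More seriously, your plan to show independence of the genus $h$ of the bounding pair cannot succeed. Let $A$ be the side of $\alpha\cup\alpha'$ not containing the puncture or boundary (either side if $S=S_g$), of genus $h$ with symplectic form $\omega_A$. Johnson's formula gives $\tau(T_\alpha T_{\alpha'}^{-1})=\pm[\alpha]\wedge\omega_A$. Since $[\alpha]$ is orthogonal to $H_1(A)$, the map $\phi$ of \eqref{eq: map phi} gives $\phi([\alpha]\wedge\omega_A)=\pm h$. Equivalently, $\kappa$ on Torelli is, up to sign, twice the contraction of $\tau$, so $\kappa(\widetilde{T_\alpha T_{\alpha'}^{-1}})=\pm2h[\alpha]$ and $\psi=\pm2h$. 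This is consistent with $2[\alpha]$ for $h=1$ and with $(2-2g)[\gamma]$ for point-pushing, where $h=g-1$.

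So the class of the bounding pair map in $H_1(\Mod(S,[\beta]);\ZZ)$ is $\pm h$ times a generator of the Torelli image. No lantern relation or conjugation can make it independent of $h$. When $p\mid h$ the bounding pair map lies in the commutator subgroup, and the listed elements represent only $p$ cosets. Examples are $p=3$ with $S=S_{5,1}$ and $A$ of genus $3$, or $S=S_7$ with both sides of genus $3$.

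The statement must therefore be read with $\alpha,\alpha'$ as in Figure~\ref{fig: surface with aab}, cobounding a genus-one subsurface (more generally, genus prime to $p$). That is what the paper's proof uses. There $T_\alpha T_{\alpha'}^{-1}$ represents $a_1\wedge a_2\wedge b_2$, $\kappa(\widetilde{T_\alpha T_{\alpha'}^{-1}})=2[\alpha]$, and $(\chi,\psi)$ takes the values $(1,0)$ and $(0,2)$, with $2$ a unit since $p$ is odd. Replace your ``independence of $h$'' step with this explicit computation under the hypothesis $p\nmid h$, and the rest of your argument goes through.
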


Although mapping classes in $\Mod(S,[\beta])$ lifts to $\widetilde{S}$, the lift is not canonical, differing by deck transformations. To obtain a canonical action on the homology of the cover, it is therefore natural to consider the subgroup of $\Mod(\widetilde{S})$ that commute with the deck transformations:
\[\Mod(\widetilde{S},\sigma)\coloneq \text{C}_{\Mod(\widetilde{S})}(\sigma),\quad \sigma \text{ generates the deck transformation group}.\]
This group is called the symmetric mapping class group and it admits the Prym representation
\begin{equation}\label{map: prym rep}
  \text{Prym}: \Mod(\widetilde{S},\sigma) \longrightarrow \Aut(H_1(\widetilde{S};\QQ)).
\end{equation}
When $S=S_g$, Birman-Hilden \cite{BirmanHilden} identifies $\Mod(\widetilde{S},\sigma)=\pi_0(C_{\Diff^+(\widetilde{S})}(\sigma))$, which yields the central short exact sequence
\[1\to \langle\sigma\rangle \to \Mod(\widetilde{S},\sigma) \to \Mod(S,[\beta])\to 1.\]
When $S=S_{g,1}$ or $S=S_g^1$, the deck transformation does not define an element of $\Mod(\widetilde{S})$, since mapping classes are required to fix punctures or boundary components pointwise. In this case, lifting gives an isomorphism
\[\Mod(\widetilde{S},\sigma)\cong \Mod(S,[\beta]).\]

For $p=2$, Sato \cite[Theorem 0.2]{sato2} showed that the abelianization of $\Mod(\widetilde{S},\sigma)$ is $\ZZ/4\ZZ$, using the Schottky theta constant. We determine the abelianization of $\Mod(\widetilde{S},\sigma)$ for every odd prime $p$.
\begin{theorem}\label{thm: Abel Mod Sigma}
Let $g\ge 4$ and $p$ be an odd prime number. If $S=S_g$, $S_g^1$, or $S_{g,1}$ then
\[H_1(\Mod(\widetilde{S},\sigma);\ZZ)\cong \ZZ/p\ZZ\oplus \ZZ/p\ZZ.\]
\end{theorem}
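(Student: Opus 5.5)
For $S=S_{g,1}$ or $S_g^1$ there is nothing new to prove. Lifting gives $\Mod(\widetilde{S},\sigma)\cong\Mod(S,[\beta])$, so the claim follows from Theorem~\ref{thm: Abel Mod beta}.

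The real case is $S=S_g$. There we use the central extension
\[1\to\langle\sigma\rangle\to\Mod(\widetilde{S},\sigma)\to\Mod(S_g,[\beta])\to1 .\]
Its five-term exact sequence in integral homology reads
\[H_2(\Mod(S_g,[\beta]))\to \langle\sigma\rangle_{\Mod(\widetilde S,\sigma)}\to H_1(\Mod(\widetilde{S},\sigma))\to H_1(\Mod(S_g,[\beta]))\to 0 .\]
Because the extension is central, the coinvariants $\langle\sigma\rangle_{\Mod(\widetilde S,\sigma)}$ are just $\langle\sigma\rangle\cong\ZZ/p\ZZ$. Hence $H_1(\Mod(\widetilde S,\sigma))$ has order at most $p\cdot|H_1(\Mod(S_g,[\beta]))|$, which is $p^2$ or $p^3$ depending on $g \bmod p$. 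Every group involved is killed by $p$ (after the next step), so the whole problem is to pin down the image of $\sigma$ and the extension class.

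\textbf{Upper bound.} I would lift the generating set used in the proof of Theorem~\ref{thm: Abel Mod beta}. These are the Dehn twists $T_\gamma$ with $\widehat i(\gamma,\beta)\equiv0$, which lift to products of $p$ disjoint twists, together with the $p$-th powers $T_\alpha^p$, which lift to a single twist about a lifted curve $\tilde\alpha$.
\begin{itemize}
\item All lifted twists of the first kind are conjugate in $\Mod(\widetilde S,\sigma)$ whenever the curves are conjugate downstairs. Moreover $\Mod(S_g,[\beta])$ acts transitively enough on such curves (nonseparating, disjoint from a representative of $\beta$, $g\ge4$) that lantern relations can be lifted to $\widetilde S$. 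This should kill the lifts of these generators.
\item The lift of $T_\alpha^p$ should generate a $\ZZ/p\ZZ$, via a lifted chain or lantern relation giving $\tilde T_\alpha^{p}\sim 1$ modulo commutators.
\item The quantity $\tilde T_\alpha\tilde T_{\alpha'}^{-1}$ contributes the second $\ZZ/p\ZZ$. In the case $g\not\equiv1\pmod p$ the downstairs relation killing it should lift to an expression of $\sigma^k$ in terms of this element and commutators.
\end{itemize}
So the plan is to show that $\sigma$ dies or is absorbed in all cases, giving order $\le p^2$ with exponent $p$.

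\textbf{Lower bound.} I would construct a surjection $\Mod(\widetilde S,\sigma)\to\ZZ/p\ZZ\oplus\ZZ/p\ZZ$ directly. One component is the composite $\Mod(\widetilde S,\sigma)\to\Mod(S_g,[\beta])\to\ZZ/p\ZZ$ through the $T_\alpha^p$-coordinate of the map \eqref{eq: abel 1}; this exists for all $g$. The second component should come from the Prym representation \eqref{map: prym rep}. Over $\ZZ[\zeta_p]$, the $\zeta_p$-isotypic part of $H_1(\widetilde S)$ has a Hermitian (unitary) structure. Taking the determinant of the Prym action and reducing modulo $(1-\zeta_p)$, or using the image in $\mu_p$, should give a homomorphism to $\ZZ/p\ZZ$. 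It should send $\sigma\mapsto\zeta_p^{\,g-1}$ (or similar) and send $\tilde T_\alpha\tilde T_{\alpha'}^{-1}$ to a generator. Evaluating both components on $\tilde T_\alpha^p$ and $\tilde T_\alpha\tilde T_{\alpha'}^{-1}$ should give an invertible $2\times2$ matrix mod $p$, proving surjectivity.

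\textbf{Main obstacle.} I expect the hardest step to be this second homomorphism. The determinant must be genuinely nontrivial and independent of the first map even when $g\not\equiv1\pmod p$. In that case the downstairs abelianization has lost its second factor, so the extra $\ZZ/p\ZZ$ must be detected exactly by how $\sigma$ and $\tilde T_\alpha\tilde T_{\alpha'}^{-1}$ act on the isotypic eigenspaces. I would first compute the Prym matrices of these explicit elements in a symplectic-type basis of $H_1(\widetilde S)$ adapted to the cover, and check their determinants in $\mu_p$.
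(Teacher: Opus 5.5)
\textbf{Gap: the upper bound when $S=S_g$ and $g\equiv1\pmod p$.} In this case your five-term sequence only gives $|H_1(\Mod(\widetilde S,\sigma);\ZZ)|\le p^3$. To get down to $p^2$ you must show that $\sigma$ lies in the commutator subgroup of $\Mod(\widetilde S,\sigma)$, or equivalently that $H_2(\Mod(S_g,[\beta]);\ZZ)\to\langle\sigma\rangle$ is onto. Your ``Upper bound'' paragraph does not establish this. It appeals to a generating set by twists $T_\gamma$ with $\widehat i(\gamma,\beta)\equiv0$ together with $T_\alpha^p$. The proof of Theorem~\ref{thm: Abel Mod beta} never uses such a set, and you do not justify it. The claims that lifted lantern or chain relations kill these lifts and ``absorb'' $\sigma$ are expectations; no relation is actually written down or lifted. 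Moreover, whether $\sigma$ survives depends on $g \bmod p$: it survives when $g\not\equiv1$. So a soft conjugacy-and-lantern argument that never sees the quantity $2-2g$ cannot settle it.

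\textbf{The missing idea.} Bound from the punctured case. Forgetting the $p$ punctures gives a homomorphism $\Mod(\widetilde{S_{g,1}},\sigma)\cong\Mod(S_{g,1},[\beta])\to\Mod(\widetilde{S_g},\sigma)$, and it is surjective for two reasons. First, its image maps onto $\Mod(S_g,[\beta])$. Second, it contains $\sigma^{\pm1}$: take the point-push along a loop meeting $\beta$ once, lift it, and compose with a deck transformation so that it fixes the punctures; forgetting the punctures, this lift becomes $\sigma^{\pm1}$. Hence $H_1(\Mod(\widetilde{S_g},\sigma);\ZZ)$ is a quotient of $(\ZZ/p\ZZ)^2$ by Theorem~\ref{thm: Abel Mod beta}, for every $g$. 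This also explains the $g$-dependence: $\psi$ of that point-push is $2-2g$, which vanishes mod $p$ exactly when $g\equiv1$.

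\textbf{Your lower bound is fine.} Use the $\mu_p$-valued determinant, not reduction mod $(1-\zeta_p)$, which kills every $\zeta_p^k$. With it, $\det\operatorname{Prym}(\sigma)=\zeta_p^{2g-2}$, not $\zeta_p^{g-1}$, though the two vanish together. Take the lift $\widetilde f$ of $T_\alpha T_{\alpha'}^{-1}$ that acts as $\sigma$ over the genus-one side of $S_g\setminus(\alpha\cup\alpha')$ and trivially over the other side. Since $G$ acts trivially on $H_*(\widetilde\alpha\sqcup\widetilde\alpha')$, Mayer--Vietoris splits the $\zeta_p$-part accordingly, giving $\det\operatorname{Prym}(\widetilde f)=\zeta_p^{\pm2}$. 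Together with $\det\operatorname{Prym}(T_{\widetilde\alpha})=1$, $\chi(T_{\widetilde\alpha})=1$ and $\chi(\widetilde f)=0$, your pair $(\chi,\det)$ is onto $(\ZZ/p\ZZ)^2$ for every $g$. This is a uniform alternative to the paper, which uses $\det(\sigma)$ only when $g\not\equiv1$ and otherwise the surjection onto $H_1(\Mod(S_g,[\beta]);\ZZ)$. Combined with the upper bound above, it completes the proof.
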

When $g\not\equiv1\pmod p$, this result has a geometric interpretation via the Prym representation. In this case, the Prym representation \eqref{map: prym rep} induces a surjection
\[H_1(\Mod(\widetilde{S},\sigma);\ZZ) \to H_1(\text{Image}(\text{Prym}))\to \ZZ/p\ZZ\oplus \ZZ/p\ZZ,\]
for odd $p$, whereas this surjectivity fails when $p=2$.

Moreover, the explicit abelianization map constructed in the proof of Theorem~\ref{thm: Abel Mod Sigma} provides the following coset representatives for the commutator subgroup of $\Mod(\widetilde{S},\sigma)$.
\begin{corollary}\label{cor: coset sigma}
   Let $\beta$ be a simple closed curve in $S$, and let $\widetilde{S}\to S$ be the $p$-fold cyclic cover determined by the mod $p$ intersection number with $\beta$, with deck transformation group $\langle\sigma\rangle$. Let $\alpha$ and $\alpha'$ be nonisotopic, disjoint, homologous simple closed curves that each intersect $\beta$ once and together separate $S$. Let $\widetilde{\alpha}$ be a lift of $\alpha$ to $\widetilde{S}$. Then the following elements form a complete set of coset representatives for the commutator subgroup of $\Mod(\widetilde{S},\sigma)$:
   \[(T_{\widetilde{\alpha}})^i\bigl(\widetilde{T_\alpha T_{\alpha'}^{-1}}\bigr)^j,\quad 0\le i,j\le p-1,\]
   where $\widetilde{T_\alpha T_{\alpha'}^{-1}}$ denotes a lift of the bounding pair map $T_\alpha T_{\alpha'}^{-1}\in\Mod(S,[\beta])$ to $\Mod(\widetilde{S},\sigma)$,
   except in the case where $S=S_g$ and $g\not\equiv1\pmod p$, in which case the representatives are
   \[(T_{\widetilde{\alpha}})^i\sigma^j,\quad 0\le i,j\le p-1.\]
\end{corollary}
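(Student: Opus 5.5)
We deduce Corollary~\ref{cor: coset sigma} from Theorem~\ref{thm: Abel Mod Sigma} and the explicit abelianization map built in its proof. Since $H_1(\Mod(\widetilde{S},\sigma);\ZZ)\cong\ZZ/p\ZZ\oplus\ZZ/p\ZZ$, the listed $p^2$ elements form a complete set of coset representatives exactly when their images in the abelianization are pairwise distinct. Equivalently, we must show that the images of the two "generators" form a basis of $(\ZZ/p\ZZ)^2$. So the plan is to evaluate the explicit surjection $\Phi:\Mod(\widetilde{S},\sigma)\to\ZZ/p\ZZ\oplus\ZZ/p\ZZ$ on $T_{\widetilde\alpha}$ and on the second generator, and check that the resulting $2\times 2$ matrix over $\ZZ/p\ZZ$ is invertible.

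\textbf{Generator $T_{\widetilde\alpha}$.} Because $\alpha$ meets $\beta$ once, its preimage is a single curve $\widetilde\alpha$ that $p$-fold covers $\alpha$. Hence $T_{\widetilde\alpha}$ commutes with $\sigma$ and projects to $T_\alpha^p\in\Mod(S,[\beta])$. By Corollary~\ref{cor: coset beta}, $T_\alpha^p$ is a generator of the $\ZZ/p\ZZ$ summand seen through the projection. Therefore one coordinate of $\Phi$, namely the one pulled back from $H_1(\Mod(S,[\beta]);\ZZ)$, is nonzero on $T_{\widetilde\alpha}$.

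\textbf{Second generator, case $S=S_{g,1}$, $S_g^1$, or $S_g$ with $g\equiv 1\pmod p$.} Here we use the lift $\widetilde{T_\alpha T_{\alpha'}^{-1}}$. In the punctured and boundary cases, lifting is an isomorphism, so the statement follows directly from Corollary~\ref{cor: coset beta}. In the closed case with $g\equiv1\pmod p$, the projection induces a map $H_1(\Mod(\widetilde{S},\sigma))\to H_1(\Mod(S,[\beta]))\cong(\ZZ/p\ZZ)^2$. This map is a surjection between groups of order $p^2$, hence an isomorphism. The images of $T_{\widetilde\alpha}$ and of the lift are the images of $T_\alpha^p$ and $T_\alpha T_{\alpha'}^{-1}$, which form a basis by Corollary~\ref{cor: coset beta}. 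Note that the choice of lift does not matter: lifts differ by powers of $\sigma$, and $\sigma$ lies in the kernel of this isomorphism, so $\sigma$ is a commutator in this case.

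\textbf{Second generator, case $S=S_g$ with $g\not\equiv1\pmod p$.} Here the projection has image $\ZZ/p\ZZ$, generated by $T_\alpha^p$. Its kernel in the abelianization is the image of the central subgroup $\langle\sigma\rangle$, and it must have order $p$ by counting. Thus $\sigma$ maps to a nonzero element that is killed by the projection, while $T_{\widetilde\alpha}$ maps to an element that is not killed by the projection. These two images are therefore independent and give a basis.

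\textbf{Main obstacle.} The hard step is justifying that $\sigma$ is nontrivial in the abelianization when $g\not\equiv1\pmod p$. Counting orders does this cleanly, assuming Theorem~\ref{thm: Abel Mod Sigma} and the $\ZZ/p\ZZ$ case of Theorem~\ref{thm: Abel Mod beta}. As a consistency check, I would also verify that the Prym-representation coordinate of $\Phi$, for instance a determinant-type character on the $\zeta_p$-eigenspace, detects $\sigma$.
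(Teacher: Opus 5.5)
Your argument is correct and follows the paper's reasoning. In the punctured, boundary and $g\equiv 1\pmod p$ cases you transport the basis $[T_\alpha^p],[T_\alpha T_{\alpha'}^{-1}]$ of Corollary~\ref{cor: coset beta} through the isomorphism $H_1(\Mod(\widetilde{S},\sigma);\ZZ)\to H_1(\Mod(S,[\beta]);\ZZ)$, exactly as in the proof of Theorem~\ref{thm: Abel Mod Sigma}. The one variation is the case $S=S_g$, $g\not\equiv 1\pmod p$: there you show $[\sigma]\neq 0$ by counting orders (the kernel of the projection on abelianizations is the image of $\langle\sigma\rangle$ and has order $p$), whereas the paper evaluates $\Phi(\sigma)=(0,2g-2)$ directly via the determinant of the Prym representation. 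Your counting is valid because Theorem~\ref{thm: Abel Mod Sigma} is already established, though the paper's evaluation is what makes $\Phi$ an explicit abelianization map.
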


We note that explicit finite generating sets for $\Mod(S_g,[\beta])$ and $\Mod(\widetilde{S},\sigma)$ were given in \cite[Theorem~2]{Dey}, although our arguments do not rely on this description. Combining their generating sets with the coset representatives provided in Corollary \ref{cor: coset beta} and Corollary \ref{cor: coset sigma}, and applying the Reidemeister--Schreier method \cite[Theorem~2.9]{Reidemeister}, one can obtain explicit finite generating sets for the commutator subgroups of $\Mod(S_g,[\beta])$ and $\Mod(\widetilde{S},\sigma)$.

We also note that $\Mod(\widetilde{S},\sigma)$ contains the level-$p$ mapping class group
\[\Mod(S,p)=\text{Ker}(\Mod(S)\to \Aut(H_1(S;\ZZ/p\ZZ))),\]
whose abelianization was computed independently by Sato \cite{SatoAbelianization}, Perron \cite{PerronAbel}, and Putman \cite{Picard}, but this abelianization is not used in our proofs.

The results of this paper are also used in the author's subsequent work on the rigidity of holomorphic maps between the corresponding moduli spaces, where the author also classifies symplectic representations of $\Mod(S,[\beta])$ and $\Mod(\widetilde{S},\sigma)$ of dimension at most $2g$.

\noindent\textbf{Outline.}
Section~2 computes several homology groups of certain subgroups of symplectic groups. Section~3 proves Theorem~\ref{thm: Abel Mod beta} which determines the abelianization of $\Mod(S,[\beta])$, and constructs an explicit abelianization map using Morita's crossed homomorphism. Section~4 proves Theorem~\ref{thm: Abel Mod Sigma} by combining Theorem~\ref{thm: Abel Mod beta} with information about the image of the Prym representation.

\noindent\textbf{Acknowledgements.} I am very grateful to Andrew Putman for many useful comments and constant support.
I am also grateful to Ursula Hamenst\"adt for helpful suggestions.
I would also like to thank Sihao Ma for a helpful conversation.
I am grateful to
the Max Planck Institute for Mathematics in Bonn for its hospitality.

\section{Homology of stabilizers in symplectic groups}

The homological computations in this paper rely on several standard tools from group homology, including the five-term exact sequence \cite[Proposition~VII.6.4]{GroupCohomology}, methods for computing boundary maps, and the long exact sequence in group homology associated to a short exact sequence of modules \cite[Proposition~III.6.1]{GroupCohomology}. We will use these tools without further review.

In this section, we compute several homology groups of certain subgroups of the symplectic group, which will be used in the proofs of the main theorems in the following sections.

We first recall some facts on the homology of the symplectic group.\begin{enumerate} \item We have (see e.g.\ \cite[Theorem 5.1]{Picard})
\begin{equation}\label{eq: H1 Sp 0}
   H_1(\Sp_{2g}(\ZZ);\ZZ)=0, \quad\text{for }g\ge 3,
\end{equation}
and 
\begin{equation}
    H_2(\Sp_{2g}(\ZZ);\ZZ)\cong \ZZ, \quad \text{for }g\ge 4.
\end{equation}
\item By Stein \cite[Theorem 2.13 and Proposition 3.3.a]{Stein}, for any odd $p$
\begin{equation}\label{eq: H2 Sp Z/p 0}
    H_2(\Sp_{2g}(\ZZ/p\ZZ);\ZZ)=0,\quad g\ge 3.
\end{equation}
\item Let $V$ be a vector space over a field $\mathbb{K}$ equipped with a symplectic form. By \cite[Theorem~2.3]{Pollatsek}, we have
\begin{equation}\label{eq: H1 Sp(V) V}
H^1(\Sp(V);V)=0,\qquad \text{if } \operatorname{char}(\mathbb{K})\neq 2,
\end{equation}
while $H^1(\Sp(V);V)\cong \mathbb{K}$
if $\operatorname{char}(\mathbb{K})=2$, by \cite[Theorem~3.3]{Pollatsek}.
\item The principal $p$-congruence subgroup of $\Sp_{2g}(\ZZ)$ is
    \[\Sp_{2g}(\ZZ,p)=\text{Ker}(\Sp_{2g}(\ZZ)\to \Sp_{2g}(\ZZ/p\ZZ)),\]
    whose abelianization is (see Putman \cite[Theorem 1.2]{AndyAbelianization}, Sato \cite[Prop.~2.1]{SatoAbelianization}, or Perron \cite[Prop.~5]{PerronAbel})
    \begin{equation}\label{eq: H1 principal lie alg}
        H_1(\Sp_{2g}(\ZZ,p);\ZZ)\cong \mathfrak{sp}_{2g}(\ZZ/p\ZZ),\quad \text{for }g\ge 3\text{ and }p\text{ odd},
    \end{equation} 
    where $\mathfrak{sp}_{2g}(\ZZ/p\ZZ)\cong \Sym^2((\ZZ/p\ZZ)^{2g})$ is the Lie algebra of $\Sp_{2g}(\ZZ/p\ZZ)$ generated by
    \begin{equation}\label{eq: gens of sp}
       \begin{aligned}
      &A_{ij}=a_i\otimes a_j+a_j\otimes a_i, &&\quad 1\le i\le j\le g; \\
      &B_{ij}=b_i\otimes b_j+b_j\otimes b_i, && \quad 1\le i\le j\le g;\\
      & C_{ij}=a_i\otimes b_j+b_j\otimes a_i, &&\quad 1\le i,j\le g.
  \end{aligned}
  \end{equation}
  where $a_1,b_1,\cdots,a_g,b_g$ is a symplectic basis of $(\ZZ/p\ZZ)^{2g}$. Moreover, the abelianization map sends $X\in \Sp_{2g}(\ZZ,p)$ to $\frac1p(X-I)\in \mathfrak{sp}_{2g}(\ZZ/p\ZZ)$.
\end{enumerate}

Given a nonzero homology class $[\beta]\in H_1(S_g;\ZZ/p\ZZ)$, let
\[\Sp_{2g}(\ZZ/p\ZZ)^{[\beta]} \coloneq \text{Stab}_{\Sp_{2g}(\ZZ/p\ZZ)}([\beta]),\]
we first obtain the abelianization of this group as follows.
\begin{proposition}
    \label{prop: abelSp(2g,Z/pZ)beta}
    Let $g\ge 4$, and $p\ge 3$ be a prime. Then
    $H_1(\Sp_{2g}(\ZZ/p\ZZ)^{[\beta]};\ZZ)=0$.
\end{proposition}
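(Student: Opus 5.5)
Our plan is to write the stabilizer as a semidirect product and use the Hochschild--Serre (or five-term) exact sequence. Since $\Sp_{2g}(\ZZ/p\ZZ)$ acts transitively on nonzero vectors, we may assume $[\beta]=a_1$ in a symplectic basis $a_1,b_1,\dots,a_g,b_g$. An element fixing $a_1$ preserves $a_1^\perp=\langle a_1,a_2,b_2,\dots,a_g,b_g\rangle$. It therefore induces an element of $\Sp(a_1^\perp/\langle a_1\rangle)\cong\Sp_{2g-2}(\ZZ/p\ZZ)$, and this gives a split surjection
\[1\to U\to \Sp_{2g}(\ZZ/p\ZZ)^{[\beta]}\to \Sp_{2g-2}(\ZZ/p\ZZ)\to 1,\]
split by the inclusion acting on $\langle a_2,\dots,b_g\rangle$. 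The unipotent radical $U$ consists of transvection-type maps $b_1\mapsto b_1+v+c\,a_1$, $x\mapsto x+\omega(v,x)a_1$ for $v\in V'=\langle a_2,\dots,b_g\rangle$ and $c\in\ZZ/p\ZZ$. This is a Heisenberg group: it is a central extension $0\to\ZZ/p\ZZ\to U\to V'\to 0$ with commutator given by the symplectic form.

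The five-term sequence gives
\[H_1(U)_{\Sp_{2g-2}}\to H_1(\Sp_{2g}(\ZZ/p\ZZ)^{[\beta]})\to H_1(\Sp_{2g-2}(\ZZ/p\ZZ))\to 0.\]
The right-hand term vanishes because $\Sp_{2g-2}(\ZZ/p\ZZ)$ is perfect for $g-1\ge 3$. We would cite this as the mod $p$ analogue of \eqref{eq: H1 Sp 0}; alternatively it follows by reduction from \eqref{eq: H1 Sp 0} together with surjectivity of $\Sp_{2g-2}(\ZZ)\to\Sp_{2g-2}(\ZZ/p\ZZ)$. It therefore suffices to show that the coinvariants $H_1(U)_{\Sp_{2g-2}}$ vanish.

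For odd $p$ and $\dim V'\ge 2$, the commutator subgroup of the Heisenberg group $U$ is exactly its center $\ZZ/p\ZZ$, since the form is nondegenerate. Hence $H_1(U)\cong V'\cong(\ZZ/p\ZZ)^{2g-2}$ as an $\Sp_{2g-2}$-module, i.e.\ the standard representation. Its coinvariants vanish: taking $-I\in\Sp_{2g-2}$, we get $v=-v$ in the coinvariants, so $2v=0$, and hence $v=0$ since $p$ is odd. Alternatively, each $v$ equals $Av-v$ for the transvection $A$ sending $w\mapsto w+\omega(w,u)v$ with $\omega(u,v)$ suitably chosen.

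The main point needing care is identifying the structure of $U$ and its abelianization as a module. This is where oddness of $p$ enters, both in the Heisenberg commutator computation and in the $-I$ trick. The remaining work is routine bookkeeping of the action under conjugation by the Levi factor.
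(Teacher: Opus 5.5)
Your proposal is correct and follows essentially the same route as the paper. Both arguments use the extension of $\Sp_{2g-2}(\ZZ/p\ZZ)$ by the Heisenberg kernel, identify $H_1$ of the kernel with the standard module $(\ZZ/p\ZZ)^{2g-2}$ because the commutator pairing is $2\omega$ with $2$ invertible, and then apply the five-term sequence together with perfectness of $\Sp_{2g-2}(\ZZ/p\ZZ)$ and vanishing coinvariants. The differences are cosmetic: you read off $[U,U]$ directly instead of via the transgression $d^2$, and you kill the coinvariants with $-I$.
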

\begin{proof}
The idea is to decompose $\Sp_{2g}(\ZZ/p\ZZ)^{[\beta]}$ into a short exact sequence and then analyze the associated five-term exact sequence. 

  Let $V=(\ZZ/p\ZZ)^{2g}$ equipped with a symplectic form $\hat{i}(-,-)$. Define \[[\beta]^\perp=\{v\in V|\hat{i}(v,[\beta])=0\}.\] Then every element of  $\Sp_{2g}(\ZZ/p\ZZ)^{[\beta]}$ preserves $[\beta]^\perp$ and acts symplectically on the quotient $[\beta]^\perp /[\beta]$, inducing a homomorphism \[\Sp_{2g}(\ZZ/p\ZZ)^{[\beta]}\to \Sp_{2g-2}(\ZZ/p\ZZ).\] This map is surjective, since any symplectic automorphism of $[\beta]^\perp/[\beta]$ can be extended to a symplectic automorphism of $V$ acting trivially on a complementary subspace. Let $K$ denote the kernel of this map. We then obtain the following short exact sequence:
  \begin{equation}\label{ses: kernelK}
      1\to K\to \Sp_{2g}(\ZZ/p\ZZ)^{[\beta]}\to \Sp_{2g-2}(\ZZ/p\ZZ)\to 1.
  \end{equation}
  The structure of the kernel $K$ is as follows. Let 
  \[a_1,b_1=[\beta],a_2,b_2,\cdots,a_g, b_g\]
  be a symplectic basis of $V$ satisfying $\hat{i}(a_j,b_k)=\delta_j^k$, and $ \hat{i}(a_j,a_k)=\hat{i}(b_j,b_k)=0$. Observe that any $f\in K$ satisfies
  \[f(b_1)=b_1,\quad  f(a_j)=a_j+m_j\cdot b_1,\quad f(b_j)=b_j+n_j\cdot b_1 \quad (2\le j\le g),\]
  for some $m_j,n_j\in \ZZ/p\ZZ$. Since $f$ preserves the symplectic form, one then obtains
  \[f(a_1)=a_1+m_1\cdot b_1-\sum\limits_{j=2}^g n_j\cdot a_j+\sum\limits_{j=2}^g m_j\cdot b_j.\]
  Thus $K$ can be identified with the set  
  \[(m_1,m_2,n_2,\cdots,m_g,n_g)\in \ZZ/p\ZZ\rtimes(\ZZ/p\ZZ)^{2g-2}\] with multiplication given by 
  \begin{align*}
      &(m_1,m_2,n_2,\cdots,m_g,n_g)\cdot(m'_1,m'_2,n'_2,\cdots,m'_g,n'_g) \\
      =&(m_1+m_1'+\sum_{i=2}^g(-m_in_i'+n_im_i'),m_2+m_2',n_2+n_2',\cdots,n_g+n_g').
  \end{align*}
  Hence $K$ fits into the following central extension:
  \begin{equation}\label{ses: middleK}
  1\to \ZZ/p\ZZ \to K \to (\ZZ/p\ZZ)^{2g-2} \to 1,
  \end{equation}
  where the map $\ZZ/p\ZZ \to K$ sends $m_1$ to $(m_1,0,\cdots,0)$. This short exact sequence does not split, and we study its five term exact sequence
  \begin{equation}\label{les: K}
      H_2(K)\to H_2((\ZZ/p\ZZ)^{2g-2};\ZZ)\to H_1(\ZZ/p\ZZ;\ZZ)_{(\ZZ/p\ZZ)^{2g-2}} \to H_1(K;\ZZ)\to H_1((\ZZ/p\ZZ)^{2g-2};\ZZ)\to 0.
  \end{equation}
  We next compute the transgression map
  \begin{equation}\label{eq: differential 1}
      d^2:H_2((\ZZ/p\ZZ)^{2g-2};\ZZ)\to H_1(\ZZ/p\ZZ;\ZZ)_{(\ZZ/p\ZZ)^{2g-2}}\cong \ZZ/p\ZZ.
  \end{equation}
  By the Hopf formula, the group $H_2((\ZZ/p\ZZ)^{2g-2};\ZZ)$ is generated by commutator classes, and the transgression sends such a class to
  \begin{equation}\label{eq: descrip of d2}
      d^2([v_1,v_2])=[\widetilde{v_1},\widetilde{v_2}],\quad v_1,v_2\in (\ZZ/p\ZZ)^{2g-2},\quad \widetilde{v_i}\in K\text{ is a lift of }v_i.
  \end{equation} Let
  \[v_1=(m_2,n_2,\cdots,m_g,n_g), \quad v_2=(m_2',n_2',\cdots,m_g',n_g').\]
  Through computations we have
  \[[\widetilde{v_1},\widetilde{v_2}]=2\sum_{i=2}^g(-m_in_i'+n_im_i').\] Since $2$ is a unit in $\ZZ/p\ZZ$ for $p\ge 3$ prime, this shows that the transgression map \eqref{eq: differential 1} is surjective. Therefore, the five-term exact sequence \eqref{les: K} implies that
  \[H_1(K;\ZZ)\cong H_1((\ZZ/p\ZZ)^{2g-2};\ZZ)\cong (\ZZ/p\ZZ)^{2g-2}.\]
  
  We now return to the short exact sequence \eqref{ses: kernelK}, which gives rise to the following five-term exact sequence:
  \begin{equation}\label{les: Sp(2g,Z/p)beta}
  \begin{aligned}
     & H_2(\Sp_{2g}(\ZZ/p\ZZ)^{[\beta]};\ZZ)\to H_2(\Sp_{2g-2}(\ZZ/p\ZZ);\ZZ) \to H_1(K;\ZZ)_{\Sp_{2g-2}(\ZZ/p\ZZ)} \to \\
      \to &H_1(\Sp_{2g}(\ZZ/p\ZZ)^{[\beta]};\ZZ)\to H_1(\Sp_{2g-2}(\ZZ/p\ZZ);\ZZ)\to 0.
  \end{aligned} 
  \end{equation}
  By \eqref{eq: H1 Sp 0}, for $g\ge 4$ we have
  \[H_1(\Sp_{2g-2}(\ZZ/p\ZZ);\ZZ)=0.\]
  Moreover, from the above computation, we obtain
  \[H_1(K;\ZZ)_{\Sp_{2g-2}(\ZZ/p\ZZ)}\cong \big((\ZZ/p\ZZ)^{2g-2}\big)_{\Sp_{2g-2}(\ZZ/p\ZZ)}=0,\]
  since the action of $\Sp_{2g-2}(\ZZ/p\ZZ)$ on $H_1(K;\ZZ)\cong (\ZZ/p\ZZ)^{2g-2}$ is the standard symplectic action. 
  Therefore, analyzing the five-term exact sequence \eqref{les: Sp(2g,Z/p)beta}, we conclude that
  \[H_1(\Sp_{2g}(\ZZ/p\ZZ)^{[\beta]};\ZZ)=0, \quad \text{for } g\ge 4. \qedhere\]
\end{proof}

Next, we compute the second integral homology of $\Sp_{2g}(\ZZ/p\ZZ)^{[\beta]}$.
\begin{proposition}\label{prop: H_2ofSp(2g,Z/p)beta}
    Let $g\ge 4$, and $p\ge 3$ be a prime. Then $H_2(\Sp_{2g}(\ZZ/p\ZZ)^{[\beta]};\ZZ)=0$.
\end{proposition}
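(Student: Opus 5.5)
Since $\Sp_{2g}(\ZZ/p\ZZ)^{[\beta]}$ is finite, its integral homology in positive degrees is torsion, and it splits into $\ell$-primary parts. I will compute it one prime at a time using the Lyndon--Hochschild--Serre spectral sequence of the extension \eqref{ses: kernelK}:
\[E^2_{s,t}=H_s(\Sp_{2g-2}(\ZZ/p\ZZ);H_t(K;\ZZ))\Rightarrow H_{s+t}(\Sp_{2g}(\ZZ/p\ZZ)^{[\beta]};\ZZ).\]
It suffices to show that $E^2_{2,0}$, $E^2_{1,1}$ and $E^2_{0,2}$ all vanish.

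\emph{Term $E^2_{2,0}$.} This is $H_2(\Sp_{2g-2}(\ZZ/p\ZZ);\ZZ)$, which vanishes by \eqref{eq: H2 Sp Z/p 0} because $g-1\ge 3$.

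\emph{Term $E^2_{1,1}$.} By the proof of Proposition~\ref{prop: abelSp(2g,Z/pZ)beta}, $H_1(K;\ZZ)\cong V'=(\ZZ/p\ZZ)^{2g-2}$ with the standard action. So the term is $H_1(\Sp(V');V')$. I would show it is zero by a center-kills argument. The central element $-I$ acts on $V'$ by $-1$, and $-1-1=-2$ is invertible modulo the odd prime $p$. The standard lemma (a central element acting on a module $M$ as a scalar $c$ with $c-1$ invertible gives $H_*(G;M)=0$) then yields $H_*(\Sp(V');V')=0$. Alternatively one can pass to the dual statement \eqref{eq: H1 Sp(V) V}.

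\emph{Term $E^2_{0,2}$.} This is $H_2(K;\ZZ)_{\Sp_{2g-2}(\ZZ/p\ZZ)}$, and it is the main obstacle. Here $K$ is the Heisenberg group of order $p^{2g-1}$ and exponent $p$ (since $p$ is odd). I would compute $H_2(K;\ZZ)$ from the LHS spectral sequence of the central extension \eqref{ses: middleK}. The contributions are:
\begin{itemize}
\item $\ker\bigl(d^2:H_2(V')\to \ZZ/p\ZZ\bigr)$, where $H_2(V')=\Lambda^2V'$ and $d^2$ is (twice) the symplectic form. Its kernel is the primitive part $\Lambda^2_0V'$, which has no $\Sp$-coinvariants: $\Lambda^2 V'$ has one-dimensional coinvariants, detected exactly by the form.
\item $E_{1,1}=H_1(V';\ZZ/p\ZZ)=V'$, modulo the image of a differential. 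Its coinvariants vanish.
\item $E_{0,2}=H_2(\ZZ/p\ZZ;\ZZ)=0$.
\end{itemize}
There is a possible higher differential, and extension issues must be handled. To take coinvariants across the resulting filtration, I use right exactness of coinvariants: if every graded piece has zero coinvariants, then so does the whole module. Hence $E^2_{0,2}=0$.

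For primes $\ell\ne p$ the argument is simpler. $K$ is a $p$-group, so $H_t(K;\ZZ_{(\ell)})=0$ for $t>0$, and the $\ell$-part of the answer equals the $\ell$-part of $H_2(\Sp_{2g-2}(\ZZ/p\ZZ);\ZZ)$, which is $0$. Combining all three vanishing terms, $H_2(\Sp_{2g}(\ZZ/p\ZZ)^{[\beta]};\ZZ)=0$.

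The delicate step is $E^2_{0,2}$. It requires checking that the description of $H_2$ of the Heisenberg group is $\Sp$-equivariant, and that only these three pieces occur, each with trivial coinvariants. If this proves awkward, I would instead verify directly that $H_2(K)$ is generated by classes coming from the Hopf formula that lie in $\Sp$-orbits of elements whose coinvariant images cancel.
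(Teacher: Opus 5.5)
Your proposal is correct and follows the paper's proof essentially step for step: the same Lyndon--Hochschild--Serre spectral sequence for \eqref{ses: kernelK}, Stein's vanishing for $E^2_{2,0}$, and the same analysis of $E^2_{0,2}=H_2(K;\ZZ)_{\Sp_{2g-2}(\ZZ/p\ZZ)}$ via the central extension \eqref{ses: middleK} and right exactness of coinvariants; the prime-by-prime splitting is harmless but unnecessary, since the integral argument already suffices. The only real variation is $E^2_{1,1}$, where your center-kills argument with $-I$ (using that $-2$ is a unit mod $p$) replaces the paper's appeal to Pollatsek's $H^1(\Sp(V);V)=0$ plus duality and is more self-contained; one step to tighten is that ``$(\wedge^2V')_{\Sp}$ is one-dimensional and detected by the form'' gives $\ker(d^2)_{\Sp}=0$ only once you also use $H_1(\Sp_{2g-2}(\ZZ/p\ZZ);\ZZ/p\ZZ)=0$ in the long exact sequence for $0\to\ker(d^2)\to\wedge^2V'\to\ZZ/p\ZZ\to 0$, which genuinely matters when $p\mid g-1$, because then $\sum_{i\ge 2}a_i\wedge b_i$ lies in $\ker(d^2)$ and $\ker(d^2)$ is not an $\Sp$-direct summand of $\wedge^2V'$.
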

\begin{proof}
    The five-term exact sequence associated to the decomposition \eqref{ses: kernelK} is not sufficient to determine $H_2$. Therefore, we study the Hochschild–Serre spectral sequence associated to \eqref{ses: kernelK} in more detail, which is given by 
    \[E^2_{p,q}\eqref{ses: kernelK}=H_p(\Sp_{2g-2}(\ZZ/p\ZZ);H_q(K;\ZZ))\Rightarrow H_{p+q}(\Sp_{2g}(\ZZ/p\ZZ)^{[\beta]};\ZZ).\]
    To show that $H_2(\Sp_{2g}(\ZZ/p\ZZ)^{[\beta]};\ZZ)$ vanishes, it suffices to show that $E_{p,q}^2\eqref{ses: kernelK}=0$ for any $p+q=2$. First, by \eqref{eq: H2 Sp Z/p 0}, we have 
    \[E_{2,0}^2\eqref{ses: kernelK}=H_2(\Sp_{2g-2}(\ZZ/p\ZZ);\ZZ)=0,\quad g\ge 4.\]
    In the proof of Proposition \ref{prop: abelSp(2g,Z/pZ)beta}, we showed that $H_1(K;\ZZ)\cong (\ZZ/p\ZZ)^{2g-2}$. Therefore,
    \[E_{1,1}^2\eqref{ses: kernelK}=H_1(\Sp_{2g-2}(\ZZ/p\ZZ);H_1(K;\ZZ))=H_1(\Sp_{2g-2}(\ZZ/p\ZZ);(\ZZ/p\ZZ)^{2g-2}).\]
    By \cite[Proposition~VI.7.1]{GroupCohomology}, we have
    \[H_1(\Sp_{2g-2}(\ZZ/p\ZZ);(\ZZ/p\ZZ)^{2g-2})\cong H^1(\Sp_{2g-2}(\ZZ/p\ZZ);(\ZZ/p\ZZ)^{2g-2}),\]
    which vanishes by \eqref{eq: H1 Sp(V) V}. It remains to show that  \[E_{0,2}^2\eqref{ses: kernelK}=H_0(\Sp_{2g-2}(\ZZ/p\ZZ);H_2(K;\ZZ))=H_2(K;\ZZ)_{\Sp_{2g-2}(\ZZ/p\ZZ)}\] also vanishes. To do this, we return to the decomposition \eqref{ses: middleK} of $K$ and consider its associated Hochschild–Serre spectral sequence:
    \[E_{p,q}^2\eqref{ses: middleK}=H_p((\ZZ/p\ZZ)^{2g-2};H_q(\ZZ/p\ZZ;\ZZ))\Rightarrow H_{p+q}(K;\ZZ).\]
    To show that $H_2(K;\ZZ)_{\Sp_{2g-2}(\ZZ/p\ZZ)}=0$, we analyze $E_{p,q}^2\eqref{ses: middleK}_{\Sp_{2g-2}(\ZZ/p\ZZ)}$ for $p+q=2$. First we have
    \[E_{0,2}^2\eqref{ses: middleK}=H_0((\ZZ/p\ZZ)^{2g-2};H_2(\ZZ/p\ZZ;\ZZ))=0\] since $H_2(\ZZ/p\ZZ;\ZZ)=0$. Next, since $\ZZ/p\ZZ$ is central in $K$, we have
    \[E_{1,1}^2\eqref{ses: middleK}_{\Sp_{2g-2}(\ZZ/p\ZZ)}=H_1((\ZZ/p\ZZ)^{2g-2};H_1(\ZZ/p\ZZ;\ZZ))_{\Sp_{2g-2}(\ZZ/p\ZZ)}=((\ZZ/p\ZZ)^{2g-2})_{\Sp_{2g-2}(\ZZ/p\ZZ)}=0.\] 
    Since $E_{1,1}^{\infty}\eqref{ses: middleK}$ is a quotient of $E_{1,1}^2\eqref{ses: middleK}$ and taking coinvariants is a right exact functor, we obtain
    \[E_{1,1}^{\infty}\eqref{ses: middleK}_{\Sp_{2g-2}(\ZZ/p\ZZ)}=0.\]
    Hence
    \[H_2(K;\ZZ)_{\Sp_{2g-2}(\ZZ/p\ZZ)}=(E_{2,0}^{\infty}\eqref{ses: middleK})_{\Sp_{2g-2}(\ZZ/p\ZZ)},\]
    Here $E_{2,0}^{\infty}=\text{Ker}(d^2)$ where \[d^2:E_{2,0}\eqref{ses: middleK}=H_2(\Sp_{2g-2}(\ZZ/p\ZZ);\ZZ)\to E_{0,1}\eqref{ses: middleK}\cong \ZZ/p\ZZ.\]
    is the transgression map computed in \eqref{eq: descrip of d2}. We have shown that this map is surjective. Therefore, by the representation theory $\ker(d^2)$ is a nontrivial irreducible $\Sp_{2g-2}(\ZZ/p\ZZ)$-representation. Hence,
    \[H_2(K;\ZZ)_{\Sp_{2g-2}(\ZZ/p\ZZ)}=\text{Ker}(d_2)_{\Sp_{2g-2}(\ZZ/p\ZZ)}=0,\] which implies $H_2(\Sp_{2g}(\ZZ/p\ZZ)^{[\beta]};\ZZ)=0$.
\end{proof}
Next, we study the group $\Sp_{2g}(\ZZ)^{[\beta]}$, which is the preimage of $\Sp_{2g}(\ZZ/p\ZZ)^{[\beta]}$ under the reduction map
\[\Sp_{2g}(\ZZ)\twoheadrightarrow\Sp_{2g}(\ZZ/p\ZZ).\]
We compute its abelianization as follows.
\begin{proposition}\label{prop: abelSp(2g,Z)beta}
    Let $g\ge 4$, and $p\ge 3$ be a prime. Then
    $H_1(\Sp_{2g}(\ZZ)^{[\beta]};\ZZ)\cong \ZZ/p\ZZ$.
\end{proposition}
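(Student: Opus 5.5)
We will use the short exact sequence
\[1\to \Sp_{2g}(\ZZ,p)\to \Sp_{2g}(\ZZ)^{[\beta]}\to \Sp_{2g}(\ZZ/p\ZZ)^{[\beta]}\to 1\]
and its five-term exact sequence
\[H_2(\Sp_{2g}(\ZZ/p\ZZ)^{[\beta]};\ZZ)\to H_1(\Sp_{2g}(\ZZ,p);\ZZ)_{\Sp_{2g}(\ZZ/p\ZZ)^{[\beta]}}\to H_1(\Sp_{2g}(\ZZ)^{[\beta]};\ZZ)\to H_1(\Sp_{2g}(\ZZ/p\ZZ)^{[\beta]};\ZZ)\to 0.\]
By Proposition~\ref{prop: H_2ofSp(2g,Z/p)beta} the left term vanishes, and by Proposition~\ref{prop: abelSp(2g,Z/pZ)beta} the right term vanishes. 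Hence
\[H_1(\Sp_{2g}(\ZZ)^{[\beta]};\ZZ)\cong \mathfrak{sp}_{2g}(\ZZ/p\ZZ)_{\Sp_{2g}(\ZZ/p\ZZ)^{[\beta]}},\]
using \eqref{eq: H1 principal lie alg}. The conjugation action of $\Sp_{2g}(\ZZ)^{[\beta]}$ on $H_1(\Sp_{2g}(\ZZ,p))$ factors through $\Sp_{2g}(\ZZ/p\ZZ)^{[\beta]}$. Under the identification $X\mapsto \frac1p(X-I)$ it is the adjoint action, i.e.\ the natural action on $\Sym^2(V)$ with $V=(\ZZ/p\ZZ)^{2g}$. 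The whole problem therefore reduces to computing the coinvariants $\Sym^2(V)_{G}$, where $G=\Sp_{2g}(\ZZ/p\ZZ)^{[\beta]}$.

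To compute these coinvariants we filter $V$ by the $G$-stable flag $0\subset \langle b_1\rangle\subset b_1^\perp\subset V$, whose graded pieces are the trivial module, $W=b_1^\perp/b_1$ (the standard module of $\Sp_{2g-2}$), and the trivial module. We produce the invariant first. The linear functional
\[\phi:\Sym^2(V)\to \ZZ/p\ZZ,\qquad x\otimes y+y\otimes x\mapsto 2\,\hat i(x,b_1)\,\hat i(y,b_1)\]
is $G$-invariant because $G$ fixes $b_1$, and it is nonzero since $\phi(A_{11})=2\neq 0$. So the coinvariants surject onto $\ZZ/p\ZZ$.

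For the upper bound we show that every element of $\ker\phi$ lies in the span of the elements $gx-x$. This uses explicit elements of $G$.
\begin{itemize}
\item The subgroup $\Sp(W)\subset G$, acting on $\langle a_2,b_2,\dots,a_g,b_g\rangle$, kills the pieces $\Sym^2 W$ and $a_1\cdot W$, $b_1\cdot W$ in coinvariants, because these are nontrivial irreducible modules (or sums of copies of $W$) with vanishing $\Sp(W)$-coinvariants for $g-1\ge 3$.
\item The transvection $T_{b_1}:a_1\mapsto a_1+b_1$ lies in $G$. It gives $T_{b_1}(A_{11})-A_{11}=2C_{11}+2B_{11}$ and $T_{b_1}(C_{11})-C_{11}=2B_{11}$, so $B_{11}$ and $C_{11}$ die.
\item Elements of $K$ mix $a_1$ with $b_j$ and $a_j$. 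They are used to kill any remaining cross terms not handled by the previous two steps.
\end{itemize}
This leaves only the image of $A_{11}$, giving $H_1\cong\ZZ/p\ZZ$.

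The main obstacle is this coinvariant bookkeeping. One must be careful that no extra invariant survives from the graded pieces, which amounts to checking that $\phi$ is the unique $G$-invariant functional on $\Sym^2V$. Equivalently, $(\Sym^2 V^*)^G$ should be one-dimensional, spanned by the square of $\hat i(-,b_1)$. We would verify this directly: an invariant quadratic form must be invariant under $\Sp(W)$ and under $K$, and $K$-invariance forces it to depend only on $\hat i(-,b_1)$.
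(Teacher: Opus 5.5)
Your proof is correct. Its upper-bound half follows the paper: the same extension $1\to\Sp_{2g}(\ZZ,p)\to\Sp_{2g}(\ZZ)^{[\beta]}\to\Sp_{2g}(\ZZ/p\ZZ)^{[\beta]}\to 1$, the vanishing of $H_1$ of the quotient, and a coinvariant computation showing that $\mathfrak{sp}_{2g}(\ZZ/p\ZZ)_{\Sp_{2g}(\ZZ/p\ZZ)^{[\beta]}}$ is generated by $A_{11}$.

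Your bookkeeping is tidier than the paper's list of explicit elements. Restricting to $\Sp(W)$ already collapses $\Sym^2 V$ to $\Sym^2\langle a_1,b_1\rangle$, and $T_{b_1}$ then kills $B_{11}$ and $C_{11}$, so your third bullet about elements of $K$ is unnecessary. There are two harmless slips in the coefficients: in fact $T_{b_1}(A_{11})-A_{11}=2C_{11}+B_{11}$ and $T_{b_1}(C_{11})-C_{11}=B_{11}$.

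Where you genuinely differ is the lower bound. You invoke Proposition~\ref{prop: H_2ofSp(2g,Z/p)beta} to make the map $H_1(\Sp_{2g}(\ZZ,p);\ZZ)_{\Sp_{2g}(\ZZ/p\ZZ)^{[\beta]}}\to H_1(\Sp_{2g}(\ZZ)^{[\beta]};\ZZ)$ injective. You then detect the coinvariants with the invariant functional $\phi$ on the Lie algebra.

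The paper never uses that $H_2$ vanishing here. It only concludes that $H_1$ is a quotient of $\ZZ/p\ZZ$. It then writes down the homomorphism $\chi(F)=\frac1p\hat i(F(b),b)\bmod p$ on all of $\Sp_{2g}(\ZZ)^{[\beta]}$ and checks that $\chi(T_{a_1}^p)=1$. Your $\phi$ is, up to a nonzero scalar, the restriction of $\chi$ to $\Sp_{2g}(\ZZ,p)$ transported through $X\mapsto\frac1p(X-I)$.

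The paper's route is more elementary. It avoids the spectral-sequence argument behind Proposition~\ref{prop: H_2ofSp(2g,Z/p)beta}, which rests on Stein's and Pollatsek's results. It also produces an explicit abelianization map on the whole group, which is reused as the first component of $(\chi,\psi)$ in Theorem~\ref{thm: Abel Mod beta} and inside $\Phi$ in Theorem~\ref{thm: Abel Mod Sigma}. Your argument is clean and formal, but it identifies $H_1$ only abstractly, so that character would still have to be built later.
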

\begin{proof}
    The reduction map induces the short exact sequence
    \begin{equation}\label{ses: Sp mod p}
        1\to \Sp_{2g}(\ZZ,p) \to \Sp_{2g}(\ZZ)^{[\beta]} \to \Sp_{2g}(\ZZ/p\ZZ)^{[\beta]} \to 1.
    \end{equation}
    The associated five-term exact sequence is
    \begin{equation}\label{les: kernelSp(2g,p)}
  \begin{aligned}
     & H_2(\Sp_{2g}(\ZZ)^{[\beta]};\ZZ)\to H_2(\Sp_{2g}(\ZZ/p\ZZ)^{[\beta]};\ZZ) \to H_1(\Sp_{2g}(\ZZ,p);\ZZ)_{\Sp_{2g}(\ZZ/p\ZZ)^{[\beta]}} \to \\
      \to &H_1(\Sp_{2g}(\ZZ)^{[\beta]};\ZZ)\to H_1(\Sp_{2g}(\ZZ/p\ZZ)^{[\beta]};\ZZ)\to 0.
  \end{aligned} 
  \end{equation}
  By Proposition \ref{prop: abelSp(2g,Z/pZ)beta}, for $g\ge 4$ we have \[H_1(\Sp_{2g}(\ZZ/p\ZZ)^{[\beta]};\ZZ)=0.\]
  Next, we claim that 
  \[H_1(\Sp_{2g}(\ZZ,p);\ZZ)_{\Sp_{2g}(\ZZ/p\ZZ)^{[\beta]}}\cong \ZZ/p\ZZ.\]
  By \eqref{eq: H1 principal lie alg}, for $g\ge 3$ we have
  \[H_1(\Sp_{2g}(\ZZ,p);\ZZ)\cong \mathfrak{sp}_{2g}(\ZZ/p\ZZ),\]
  with a basis given in \eqref{eq: gens of sp}. We compute its
  $\Sp_{2g}(\ZZ/p\ZZ)^{[\beta]}$-coinvariants directly with respect to this basis.
\begin{enumerate}
      \item Take $F\in\Sp_{2g}(\ZZ/p\ZZ)^{[\beta]}$ such that $F(a_i)=a_i+b_1,F(a_1)=a_1+b_i$ ($i\ge 2$) and $F$ fixes all other basis vectors, then
      \begin{align*}
         & FA_{1j}-A_{1j}=C_{ji},\quad FA_{ij}-A_{ij}=C_{j1},\quad i\neq j\ge 2; \\
         & FC_{ii}-C_{ii}=B_{1i},\quad FC_{1k}-C_{1k}=B_{ik}, \quad k\ge 2;\\
         & FA_{ii}-A_{ii}=2C_{i1}+B_{11},\quad FA_{11}-A_{11}=2C_{1i}+B_{ii};\\
         & FA_{1i}-A_{1i}=B_{1i}+C_{ii}+C_{11}.
      \end{align*}
      \item Take $F\in\Sp_{2g}(\ZZ/p\ZZ)^{[\beta]}$ such that $F(b_i)=b_i+b_1,F(a_1)=a_1-a_i$ ($i\ge 2$) and $F$ fixes all other basis vectors, then
      \begin{align*}
          &FA_{1k}-A_{1k}=-A_{ik}, \quad  FA_{11}-A_{11}=A_{ii}-2A_{1i},\quad k\ge 2;\\
          & FC_{1i}-C_{1i}=C_{11}-C_{ii}-C_{i1}.
      \end{align*}
  \end{enumerate}
  From the above, we observe that in the $\Sp_{2g}(\ZZ/p\ZZ)^{[\beta]}$-coinvariants, each $A_{ij}$ ($1\le i\le j\le g$) vanishes, except possibly $A_{11}$; each $B_{ij}$ ($1\le i\le j\le g$) vanishes; each $C_{ij}$ ($1\le i,j \le g)$ vanishes except possibly $C_{ii}$. Moreover, from the last equation in (1) and the last equation in (2), we obtain
  \[[C_{11}]=-[C_{ii}]=-[C_{11}],\] which implies $[C_{ii}]=0$ since $p$ is odd. Therefore, only $A_{11}$ can possibly survive. 
  
  Note that any element $F$ in $\Sp_{2g}(\ZZ/p\ZZ)^{[\beta]}$ fixes $b_1=[\beta]$. Consequently, the term $A_{11}=2 a_1\otimes a_1$ does not appear in any expression of the form $F\cdot x-x$, for $x\in \mathfrak{sp}_{2g}(\ZZ/p\ZZ)$. Therefore 
  \[H_1(\Sp_{2g}(\ZZ,p);\ZZ)_{\Sp_{2g}(\ZZ/p\ZZ)^{[\beta]}} \cong \ZZ/p\ZZ,\]generated by $A_{11}$. 
  Then the five-term exact sequence \eqref{les: kernelSp(2g,p)} implies that $H_1(\Sp_{2g}(\ZZ)^{[\beta]};\ZZ)$ is a quotient of $\ZZ/p\ZZ$. It remains to show that it admits a nontrivial quotient of order $p$. We construct such a quotient as follows:
  \[\chi:\Sp_{2g}(\ZZ)^{[\beta]} \to \ZZ/p\ZZ, \quad F\mapsto \frac1p\hat{i}(F(b),b) \pmod p,\]
  where $b\in \ZZ^{2g}$ is a lift of $[\beta]\in (\ZZ/p\ZZ)^{2g}$. It is straightforward to check that this definition is independent of the choice of the lift $b_1\in \ZZ^{2g}$. We next verify that this map is a homomorphism. For $F,G$ in $\Sp_{2g}(\ZZ)^{[\beta]}$, we have
  \[G(b)=b+p\cdot x, \quad x\in \ZZ^{2g},\]
  and 
  \[\frac1p \hat{i}(F\circ G(b),b)=\frac1p \hat{i}(F(b+p\cdot x),b)=\frac1p \hat{i}(F(b),b)+\hat{i}(F(x),b)\]
  Since $F$ preserves $[b]=[\beta]\in (\ZZ/p\ZZ)^{2g}$, we have 
  \[\hat{i}(F(x),b)=\hat{i}(F(x),F(b))=\hat{i}(x,b)=\hat{i}(\frac1p (G(b)-b),b)=\frac1p \hat{i}( G(b),b) \pmod p.\]
  Hence, we have 
  \[\frac1p \hat{i}(F\circ G(b),b)=\frac1p \hat{i}(F(b),b)+\frac1p \hat{i}(G(b),b).\]
  From definition, we have $\chi(T_{a_1}^p)=1$, so $\chi$ is nontrivial. We conclude that 
  \[H_1(\Sp_{2g}(\ZZ)^{[\beta]};\ZZ)\cong \ZZ/p\ZZ. \qedhere\]
\end{proof}
For later use, we compute the following twisted homology group:
\begin{lemma}\label{lem: twisted H1}
    Let $g\ge 4$ and $p\ge 3$ be prime. Then \[H_1(\Sp_{2g}(\ZZ/p\ZZ)^{[\beta]};\mathfrak{sp}_{2g}(\ZZ/p\ZZ))=0,\]
    where $\Sp_{2g}(\ZZ/p\ZZ)^{[\beta]}$ acts on the Lie algebra $\mathfrak{sp}_{2g}(\ZZ/p\ZZ)$ by conjugation.
\end{lemma}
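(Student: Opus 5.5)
We plan to compute this twisted homology with the Hochschild--Serre spectral sequence of the extension \eqref{ses: kernelK},
\[E^2_{s,t}=H_s\bigl(\Sp_{2g-2}(\ZZ/p\ZZ);H_t(K;\mathfrak{sp}_{2g}(\ZZ/p\ZZ))\bigr)\Rightarrow H_{s+t}\bigl(\Sp_{2g}(\ZZ/p\ZZ)^{[\beta]};\mathfrak{sp}_{2g}(\ZZ/p\ZZ)\bigr).\]
It suffices to show that $E^2_{1,0}=0$ and $E^2_{0,1}=0$. The main tool is a filtration of $M=\mathfrak{sp}_{2g}(\ZZ/p\ZZ)\cong\Sym^2 V$ that is stable under the stabilizer of $[\beta]$. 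Write $L=\langle b_1\rangle\subset W=[\beta]^\perp\subset V$, and set $\overline W=W/L$. Then $\Sym^2V$ has a filtration by $\Sym^2 L\subset L\cdot W\subset W\cdot W\subset W\cdot V\subset \Sym^2V$. The successive graded pieces are
\[\ZZ/p\ZZ,\quad \overline W,\quad \Sym^2\overline W\oplus(\text{trivial}),\quad \overline W,\quad \ZZ/p\ZZ.\]
The middle piece $W\cdot W/L\cdot W\cong \Sym^2\overline W$ is exact; the extra trivial line comes from the graded piece $W\cdot V/W\cdot W$, which contains $\overline W\otimes (V/W)$ and the $b_1 a_1$ term. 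We will fix the precise bookkeeping of this middle part carefully. The group $K$ acts trivially on each graded piece, and $\Sp_{2g-2}(\ZZ/p\ZZ)=\Sp(\overline W)$ acts through the standard representations.

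\textbf{Step 1: the term $E^2_{1,0}=H_1(\Sp(\overline W);M_K)$.} The coinvariants $M_K$ are computed directly from the formulas for the elements $F\in K$ (these are the elements that are transvection-like in $b_1$). The element $F$ sends $a_1\mapsto a_1+\cdots$, so $F\cdot x-x$ pushes every class up the filtration toward $L$. We expect $M_K$ to be the top layer $V\cdot V/W\cdot V\cong\ZZ/p\ZZ$, spanned by $A_{11}$; this matches the computation in Proposition~\ref{prop: abelSp(2g,Z)beta}, where only $A_{11}$ survived. Granting this, $E^2_{1,0}=H_1(\Sp_{2g-2}(\ZZ/p\ZZ);\ZZ/p\ZZ)=0$ by \eqref{eq: H1 Sp 0}, since $H_1(\Sp_{2g-2}(\ZZ/p\ZZ);\ZZ)=0$ and $H_0=\ZZ$ is free.

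\textbf{Step 2: the term $E^2_{0,1}=H_1(K;M)_{\Sp(\overline W)}$.} This is the hard step. We filter $M$ by $K$-submodules with trivial $K$-action on the quotients, and use the long exact sequences in $H_*(K;-)$. Each layer contributes $H_1(K;N)=H_1(K;\ZZ)\otimes N\oplus \mathrm{Tor}$-terms. Since $K$ is a $p$-group of exponent $p$ and $N$ is an $\mathbb F_p$-module, $H_1(K;N)\cong H_1(K;\ZZ)\otimes N=\overline W\otimes N$. The layers $N$ are $\ZZ/p\ZZ$, $\overline W$ and $\Sym^2\overline W$, giving pieces $\overline W$, $\overline W\otimes\overline W$ and $\overline W\otimes \Sym^2\overline W$. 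After taking $\Sp(\overline W)$-coinvariants, the odd-degree tensors $\overline W$ and $\overline W\otimes\Sym^2\overline W$ vanish, because $-I$ acts by $-1$ and $p$ is odd. The invariant line in $\overline W\otimes\overline W$, given by the symplectic form, can survive.

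Coinvariants are only right exact, so there is real work here. We must show that the surviving copies of $(\overline W\otimes\overline W)_{\Sp}\cong\ZZ/p\ZZ$, coming from the layers $L\cdot W/L$ and $W\cdot V/W\cdot W$, are killed by connecting maps. Equivalently, we must show they do not appear in $H_1(K;M)$ at all. For this we use the commutator relation in $K$, whose central $\ZZ/p\ZZ$ is generated by the commutator pairing $2\hat i$, computed in Proposition~\ref{prop: abelSp(2g,Z/pZ)beta}. The connecting homomorphism $H_1(K;\overline W)\to H_0(K;\ZZ/p\ZZ)$ and its analogue from the top layers are cup/cap products with the class of the extension. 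Their $\Sp$-invariant component is multiplication by $2$ on the symplectic-form line, which is a unit.

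As a cleaner alternative, we would first pass to the $-I$-action: the central element $-I\in\Sp(\overline W)$ lifts to the stabilizer of $[\beta]$ as $\mathrm{diag}(1,1,-1,\dots)$, which is central modulo $K$. Decomposing $M$ into $\pm1$-eigenspaces of this lift reduces the problem to the even part, namely the layers $\Sym^2 L$, $\Sym^2\overline W$ and the top line. For the odd part, all terms vanish because an involution of order prime to $p$ acts by $-1$. This confines the delicate check to the single invariant line, and that line is where we will spend the effort.
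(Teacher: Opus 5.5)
\textbf{Verdict.} Your route is a genuinely different one from the paper's and can be completed. As written, though, the decisive step is missing, and the check you propose for it is the wrong one.

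\textbf{Comparison with the paper.} Your outline is: Hochschild--Serre for \eqref{ses: kernelK}, then $E^2_{1,0}=0$, then a filtration of $M=\mathfrak{sp}_{2g}(\ZZ/p\ZZ)$ by $\Sp_{2g}(\ZZ/p\ZZ)^{[\beta]}$-stable subspaces on whose graded pieces $K$ acts trivially. Step 1 is fine: the elements used in the proof of Proposition~\ref{prop: abelSp(2g,Z)beta} already lie in $K$, so $M_K\cong\ZZ/p\ZZ$. The paper instead passes through the center $Z\cong\ZZ/p\ZZ$ of $K$. It shows $H_1(Z;M)_V\cong\mathfrak{sp}_{2g-2}(\ZZ/p\ZZ)$, and that $H_1(V;M_Z)$ is a quotient of the odd tensor $V\otimes\Sym^2V$. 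The key input there is a bar-resolution computation showing that $\partial\colon H_2(V;\ZZ/p\ZZ)\to H_1(V;V)$ hits all of $\wedge^2V$. Working over the abelian quotient $V$ lets the paper kill the symplectic line wholesale and finish with parity alone.

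\textbf{Bookkeeping and the connecting maps.} You correctly locate the two potential survivors $(\overline W\otimes\overline W)_{\Sp}\cong\ZZ/p\ZZ$, and correctly expect cap products with module-extension classes to handle them. First fix the filtration:
\[\Sym^2L\subset L\cdot W\subset\Sym^2W\subset\Sym^2W+L\cdot V\subset W\cdot V\subset\Sym^2V\]
(call these $M_1\subset\cdots\subset M_6$). Its graded pieces are $\ZZ/p\ZZ$ ($B_{11}$), $\overline W$, $\Sym^2\overline W$, $\ZZ/p\ZZ$ ($C_{11}$), $\overline W$, $\ZZ/p\ZZ$ ($A_{11}$). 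There is no trivial line beside $\Sym^2\overline W$.

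Write $f(w)=w+\hat i(w,u_f)b_1$ for $f\in K$ and $w\in W$, with $u_f\in\overline W\cong H_1(K;\ZZ)$. Then, up to sign, the connecting maps are as follows.
\begin{enumerate}
\item For the lower $\overline W$ layer, the map $H_1(K;M_2/M_1)\to H_0(K;M_1)$ is the contraction $c(u\otimes w)=\hat i(w,u)$.
\item For the upper $\overline W$ layer, the map $H_1(K;M_5/M_4)\to (M_4)_K\cong\Sym^2\overline W\oplus\ZZ/p\ZZ$ is $u\otimes w\mapsto[u\cdot w]+\hat i(w,u)[C_{11}]$.
\end{enumerate}
The commutator pairing $2\hat i$ of $K$ plays no role here.

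\textbf{The gap.} Coinvariants are only right exact. So what you need is that the \emph{kernels} of these maps have zero $\Sp_{2g-2}(\ZZ/p\ZZ)$-coinvariants. Those kernels are $\Sym^2\overline W\oplus\wedge^2_0\overline W$ and $\wedge^2_0\overline W$ respectively, where $\wedge^2_0\overline W=\ker c\cap\wedge^2\overline W$. Given that, the sequences $H_1(K;M_{i-1})_{\Sp}\to H_1(K;M_i)_{\Sp}\to(\ker\delta_i)_{\Sp}\to0$ propagate vanishing up the filtration. Your parity argument handles the odd layers, since it applies to any submodule of an odd tensor.

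Checking that the map is ``a unit on the symplectic-form line'' does not give this. It actually fails when $p\mid g-1$. With $\omega=\sum_{j\ge2}(a_j\otimes b_j-b_j\otimes a_j)$ one has $c(\omega)=-2(g-1)$. So when $p\mid g-1$, $\omega$ lies in $\wedge^2_0\overline W$. What must die is the coinvariant quotient detected by $c$, not the invariant line.

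\textbf{How to close it.} The missing fact $(\wedge^2_0\overline W)_{\Sp}=0$ follows by duality. An invariant class in $(\wedge^2\overline W)^*/\langle c\rangle$ lifts to some $x$ with $Fx-x=\chi(F)c$, where $\chi$ is a homomorphism. Since $\Sp_{2g-2}(\ZZ/p\ZZ)$ is perfect, $\chi$ is trivial. So $x$ is an invariant alternating form, hence a multiple of $c$, and the class is zero. Alternatively, dualize throughout and compute $H^1(K;M^*)^{\Sp}$, where invariants are left exact. With both connecting maps made explicit and this lemma added, your route closes and avoids the bar-resolution computation.

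\textbf{Discard the ``cleaner alternative''.} The lift $\mathrm{diag}(1,1,-1,\dots,-1)$ of $-I$ acts by $-1$ on $K/Z$, so it does not centralize $K$. Its eigenspaces in $M$ are therefore not $K$-submodules and do not split $H_1(K;M)$. Moreover, $H_1(K;\ZZ)\cong\overline W$ is itself odd. Hence the even part of $H_1(K;M)$ comes from the two $\overline W$ layers, not from $\Sym^2L$, $\Sym^2\overline W$ and the top line as you state.
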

\begin{proof}
    Recall the short exact sequence \eqref{ses: kernelK}:
    \[1\to K\to \Sp_{2g}(\ZZ/p\ZZ)^{[\beta]}\to \Sp_{2g-2}(\ZZ/p\ZZ)\to 1.\]
    The associated five-term exact sequence with coefficients in $\mathfrak{sp}_{2g}(\ZZ/p\ZZ)$ is given by
    \begin{equation}\label{les: twisted 5 term of kernel K}
  \begin{aligned}
     & H_2(\Sp_{2g}(\ZZ/p\ZZ)^{[\beta]};\mathfrak{sp}_{2g}(\ZZ/p\ZZ))\to H_2(\Sp_{2g-2}(\ZZ/p\ZZ);\mathfrak{sp}_{2g}(\ZZ/p\ZZ)_{K}) \to \\ \to& H_1(K;\mathfrak{sp}_{2g}(\ZZ/p\ZZ))_{\Sp_{2g-2}(\ZZ/p\ZZ)} \to 
     H_1(\Sp_{2g}(\ZZ/p\ZZ)^{[\beta]};\mathfrak{sp}_{2g}(\ZZ/p\ZZ))\to \\ \to & H_1(\Sp_{2g-2}(\ZZ/p\ZZ);\mathfrak{sp}_{2g}(\ZZ/p\ZZ)_{K})\to 0.
  \end{aligned} 
  \end{equation}
  Recall that in the proof of Proposition \ref{prop: abelSp(2g,Z)beta}, we showed that \[\mathfrak{sp}_{2g}(\ZZ/p\ZZ)_{\Sp_{2g}(\ZZ/p\ZZ)^{[\beta]}}\cong\ZZ/p\ZZ.\]
  The same computation gives
  \[\mathfrak{sp}_{2g}(\ZZ/p\ZZ)_{K}\cong \mathfrak{sp}_{2g}(\ZZ/p\ZZ)_{\Sp_{2g}(\ZZ/p\ZZ)^{[\beta]}}\cong\ZZ/p\ZZ.\]
  Moreover, the induced action of $\Sp_{2g-2}(\ZZ/p\ZZ)$ on this coinvariants is trivial. Therefore, we have
  \[H_1(\Sp_{2g-2}(\ZZ/p\ZZ);\mathfrak{sp}_{2g}(\ZZ/p\ZZ)_{K})=H_1(\Sp_{2g-2}(\ZZ/p\ZZ);\ZZ/p\ZZ)\cong H_1(\Sp_{2g-2}(\ZZ/p\ZZ);\ZZ)\otimes \ZZ/p\ZZ=0,\]
  by \eqref{eq: H1 Sp 0} and
  \[H_2(\Sp_{2g-2}(\ZZ/p\ZZ);\mathfrak{sp}_{2g}(\ZZ/p\ZZ)_{K})=H_2(\Sp_{2g-2}(\ZZ/p\ZZ);\ZZ/p\ZZ)\cong H_2(\Sp_{2g-2}(\ZZ/p\ZZ);\ZZ)\otimes \ZZ/p\ZZ=0,\]
  by \eqref{eq: H2 Sp Z/p 0}. The five-term exact sequence \eqref{les: twisted 5 term of kernel K} then reduces to 
  \[H_1(K;\mathfrak{sp}_{2g}(\ZZ/p\ZZ))_{\Sp_{2g-2}(\ZZ/p\ZZ)} \xrightarrow{\cong} 
     H_1(\Sp_{2g}(\ZZ/p\ZZ)^{[\beta]};\mathfrak{sp}_{2g}(\ZZ/p\ZZ)).\]
  Thus, the statement of this lemma reduces to showing that
  \[H_1(K;\mathfrak{sp}_{2g}(\ZZ/p\ZZ))_{\Sp_{2g-2}(\ZZ/p\ZZ)}=0.\]
  Recall the short exact sequence \eqref{ses: middleK}:
  \[1\to \ZZ/p\ZZ \to K \to (\ZZ/p\ZZ)^{2g-2} \to 1.\]
  The associated five-term exact sequence with coefficients in $\mathfrak{sp}_{2g}(\ZZ/p\ZZ)$ is 
  \begin{equation}\label{les: twisted 5 term of mid K}
  \begin{aligned}
     & H_2(K;\mathfrak{sp}_{2g}(\ZZ/p\ZZ))\to H_2((\ZZ/p\ZZ)^{2g-2};\mathfrak{sp}_{2g}(\ZZ/p\ZZ)_{\ZZ/p\ZZ}) \to  H_1(\ZZ/p\ZZ;\mathfrak{sp}_{2g}(\ZZ/p\ZZ))_{(\ZZ/p\ZZ)^{2g-2}} \to 
     \\ \to&H_1(K;\mathfrak{sp}_{2g}(\ZZ/p\ZZ))\to  H_1((\ZZ/p\ZZ)^{2g-2};\mathfrak{sp}_{2g}(\ZZ/p\ZZ)_{\ZZ/p\ZZ})\to 0.
  \end{aligned} 
  \end{equation}
  First, we have
  \[H_1(\ZZ/p\ZZ;\mathfrak{sp}_{2g}(\ZZ/p\ZZ))\cong \frac{\text{Ker}(t-1:\mathfrak{sp}_{2g}(\ZZ/p\ZZ)\to \mathfrak{sp}_{2g}(\ZZ/p\ZZ))}{\operatorname{Im}(\sum\limits_{i=0}^{p-1}t^i:\mathfrak{sp}_{2g}(\ZZ/p\ZZ)\to \mathfrak{sp}_{2g}(\ZZ/p\ZZ))},\]
  where $t$ is the generator of $\ZZ/p\ZZ$ acting on $\mathfrak{sp}_{2g}(\ZZ/p\ZZ)$ by changing the symplectic basis 
  \[a_1,b_1,\cdots,a_g,b_g\longmapsto a_1+b_1,b_1,\cdots,a_g,b_g.\]
  We compute this homology group using the generators in
\eqref{eq: gens of sp} of $\mathfrak{sp}_{2g}(\ZZ/p\ZZ)$ as follows.
  \begin{enumerate}
      \item A direct computation gives
      \begin{equation}\label{eq: Z/p invariants of sp}
      \text{Ker}(t-1)=\mathfrak{sp}_{2g}(\ZZ/p\ZZ)^{\ZZ/p\ZZ}=(\ZZ/p\ZZ)\oplus (\ZZ/p\ZZ)^{2g-2}\oplus \mathfrak{sp}_{2g-2}(\ZZ/p\ZZ)
  \end{equation} 
  where the $(\ZZ/p\ZZ)$-summand is generated by $B_{11}$ and the $(\ZZ/p\ZZ)^{2g-2}$-summand is generated by $B_{1i}$ and $C_{i1}$ for $2\le i \le g$. 
     \item  We compute the image of the norm map. The image of each basis element of $\mathfrak{sp}_{2g}(\ZZ/p\ZZ)$ under $\sum\limits_{i=0}^{p-1}t^i$ is $0$, except possibly for $A_{11}$. In this case, we have
      \[(\sum\limits_{i=0}^{p-1}t^i) A_{11}=pA_{11}+2\sum\limits_{i=1}^{p-1}iC_{11}+\sum\limits_{i=1}^{p-1}i^2 B_{11}=\frac{p(p-1)(2p-1)}{6}B_{11},\]
      which is $2B_{11}$ if $p=3$, and is $0$ if $p>3$.
  \end{enumerate}
  Therefore, we have 
  \[H_1(\ZZ/p\ZZ;\mathfrak{sp}_{2g}(\ZZ/p\ZZ))\cong(\ZZ/p\ZZ)^{2g-2}\oplus \mathfrak{sp}_{2g-2}(\ZZ/p\ZZ), \quad \text{if }p=3, \]
  and 
  \[H_1(\ZZ/p\ZZ;\mathfrak{sp}_{2g}(\ZZ/p\ZZ))\cong(\ZZ/p\ZZ)\oplus(\ZZ/p\ZZ)^{2g-2}\oplus \mathfrak{sp}_{2g-2}(\ZZ/p\ZZ), \quad \text{if }p>3. \]
  The action of the quotient group $(\ZZ/p\ZZ)^{2g-2}$ of $K$ on this homology group is via a lift to $K$ and the action of $K$ on $\mathfrak{sp}_{2g}(\ZZ/p\ZZ)$ by conjugation. That is, 
  \[(m_2,n_2,\cdots,m_g,n_g)\in (\ZZ/p\ZZ)^{2g-2}\]
  acts on $\mathfrak{sp}_{2g}(\ZZ/p\ZZ)$ by changing the symplectic basis
  \begin{align*}
      a_1&\mapsto a_1+m_1\cdot b_1-\sum\limits_{j=2}^g n_j\cdot a_j+\sum\limits_{j=2}^g m_j\cdot b_j, \\
      b_1&\mapsto b_1 ,\\
      a_j&\mapsto m_j\cdot b_1,\quad b_j \mapsto b_j+n_j\cdot b_1 \quad (2\le j\le g).
  \end{align*}
  A direct computation then shows that
  \[H_1(\ZZ/p\ZZ;\mathfrak{sp}_{2g}(\ZZ/p\ZZ))_{(\ZZ/p\ZZ)^{2g-2}}\cong \mathfrak{sp}_{2g-2}(\ZZ/p\ZZ).\]
  Since 
  \[\mathfrak{sp}_{2g-2}(\ZZ/p\ZZ)_{\Sp_{2g-2}(\ZZ)}=0\]
  and the coinvariants functor is right exact, taking
$\Sp_{2g-2}(\ZZ)$-coinvariants of the last three terms of the five-term exact sequence
\eqref{les: twisted 5 term of mid K} yields
  \[H_1(K;\mathfrak{sp}_{2g}(\ZZ/p\ZZ))_{\Sp_{2g-2}(\ZZ/p\ZZ)}\cong H_1((\ZZ/p\ZZ)^{2g-2};\mathfrak{sp}_{2g}(\ZZ/p\ZZ)_{\ZZ/p\ZZ})_{\Sp_{2g-2}(\ZZ/p\ZZ)}.\]
  We now compute the right hand side and show that it vanishes.
 \begin{enumerate}
     \item Since the generator of $\ZZ/p\ZZ$ acts on $\mathfrak{sp}_{2g}(\ZZ/p\ZZ)$ by sending the symplectic basis element $a_1$ to $a_1+b_1$ and fixing all other basis elements, a direct computation gives the coinvariants
     \begin{equation*}
      \mathfrak{sp}_{2g}(\ZZ/p\ZZ)_{\ZZ/p\ZZ}=(\ZZ/p\ZZ)\oplus (\ZZ/p\ZZ)^{2g-2}\oplus \mathfrak{sp}_{2g-2}(\ZZ/p\ZZ),
    \end{equation*}
  where the $(\ZZ/p\ZZ)$-summand is generated by $A_{11}$ and the $(\ZZ/p\ZZ)^{2g-2}$-summand is generated by $A_{1i}$ and $C_{1i}$ for $2\le i \le g$. For simplicity, write $V=(\ZZ/p\ZZ)^{2g-2}$, then
  \begin{equation}\label{eq: Z/p coinvariants of sp}
      \mathfrak{sp}_{2g}(\ZZ/p\ZZ)_{\ZZ/p\ZZ}=(\ZZ/p\ZZ) \oplus V \oplus \Sym^2(V).
  \end{equation}
     \item Note that the quotient group $(\ZZ/p\ZZ)^{2g-2}$ of $K$ acts on \eqref{eq: Z/p coinvariants of sp} via
     \begin{equation}\label{eq: action}
      g\cdot (a,b,c)=(a,b+2ag,c+g\otimes b+b\otimes g+2ag\otimes g),
  \end{equation}
  for $g\in (\ZZ/p\ZZ)^{2g-2}=V$ (we slightly abuse notation by using V both as a group and as a module). Then consider the following short exact sequence of $V$-modules
   \[0\to \Sym^2(V) \to (\ZZ/p\ZZ)\oplus V \oplus \Sym^2(V) \to (\ZZ/p\ZZ)\oplus V\to 0,\]
  which induces the corresponding long exact sequence in homology:
  \begin{equation}\label{les: kernel sym2}
      \begin{aligned}
          \cdots\to & H_1(V;\Sym^2(V))\to H_1(V;\mathfrak{sp}_{2g}(\ZZ/p\ZZ)_{\ZZ/p\ZZ})\to H_1(V;(\ZZ/p\ZZ)\oplus V)\to \\
          \to & H_0(V;\Sym^2(V))\to H_0(V;\mathfrak{sp}_{2g}(\ZZ/p\ZZ)_{\ZZ/p\ZZ})\to H_0(V;(\ZZ/p\ZZ)\oplus V)\to 0.
      \end{aligned}
  \end{equation}
  Via the action \eqref{eq: action}, we obtain the following coinvariants:  
  \begin{align*}
     & H_0(V;\Sym^2(V))=\Sym^2(V)_{V}=\Sym^2(V), \\
     & H_0(V;\mathfrak{sp}_{2g}(\ZZ/p\ZZ)_{\ZZ/p\ZZ})=(\mathfrak{sp}_{2g}(\ZZ/p\ZZ)_{\ZZ/p\ZZ})_{V}=\ZZ/p\ZZ, \\
     & H_0(V;(\ZZ/p\ZZ)\oplus V)=\big((\ZZ/p\ZZ)\oplus V\big)_{V}=\ZZ/p\ZZ.
  \end{align*} 
  Then the long exact sequence \eqref{les: kernel sym2} reduces to
  \begin{equation}\label{les: reduced}
          H_1(V;\ZZ)\otimes \Sym^2(V) \to H_1(V;\mathfrak{sp}_{2g}(\ZZ/p\ZZ)_{\ZZ/p\ZZ}) \to H_1(V;(\ZZ/p\ZZ)\oplus V) \to \Sym^2(V) \to 0.
  \end{equation}
  \item Next, we compute $H_1(V;(\ZZ/p\ZZ)\oplus V)$. Consider the short exact sequence of $V$-modules
  \begin{equation}\label{ses: V Z/p}
      0\to V \to (\ZZ/p\ZZ)\oplus V \to \ZZ/p\ZZ\to 0,
  \end{equation}
  where $g\in V$ acts on $(\ZZ/p\ZZ)\oplus V$ via
  \[g\cdot (a,b)=(a,b+2ag).\]
  The associated long exact sequence in homology is
  \begin{equation*}
      \begin{aligned}
          \cdots  \to& H_2(V;\ZZ/p\ZZ)\to  H_1(V;V)\to H_1(V;(\ZZ/p\ZZ)\oplus V)\to H_1(V;\ZZ/p\ZZ)\to \\
          \to & H_0(V;V)\to H_0(V;(\ZZ/p\ZZ)\oplus V)\to H_0(V;\ZZ/p\ZZ)\to 0, 
      \end{aligned}
  \end{equation*}
  which, after identifying $H_0(V;-)$ as $V$-coinvariants, becomes
  \begin{equation*}
      \begin{aligned}
          \cdots  \to& H_2(V;\ZZ/p\ZZ)\to  H_1(V;V)\to H_1(V;(\ZZ/p\ZZ)\oplus V)\to V\to \\
          \to & V\to \ZZ/p\ZZ\to \ZZ/p\ZZ\to 0.
      \end{aligned}
  \end{equation*}
  This implies that
  \[H_1(V;(\ZZ/p\ZZ)\oplus V)\cong \text{coker}(\partial: H_2(V;\ZZ/p\ZZ)\to  H_1(V;V)).\]
  Here we have
    \[H_2(V;\ZZ/p\ZZ)\cong \wedge^2 V \oplus V,\]
    which follows from the universal coefficient theorem:
  \[0\to H_2(V;\ZZ)\otimes (\ZZ/p\ZZ)\to H_2(V;\ZZ/p\ZZ)\to \text{Tor}_1(H_1(V;\ZZ),\ZZ/p\ZZ)\to 0.\]
  We now compute the connecting homomorphism $\partial$ using the standard (right) bar resolution. Recall that $\partial$ is obtained by lifting a cycle in $C_2(V;\ZZ/p\ZZ)$ to $C_2(V;(\ZZ/p\ZZ)\oplus V)$ and then applying the chain differential.
  
  The $\wedge^2 V$-summand of $H_2(V;\ZZ/p\ZZ)$ is generated by the classes
  \[([g|h]-[h|g])\otimes_{\ZZ[V]} 1, \quad g,h\in V,1\in \ZZ/p\ZZ,\] where $1\in \ZZ/p\ZZ$ lifts to $(1,0)\in (\ZZ/p\ZZ)\oplus V$. Applying the bar differential gives
  \begin{align*}
  &d\big(([g|h]-[h|g])\otimes_{\ZZ[V]} (1,0)\big)\\
  =&\big(g[h]\otimes_{\ZZ[V]}(1,0)-[gh]\otimes_{\ZZ[V]}(1,0)+[g]\otimes_{\ZZ[V]}(1,0)\big)\\
      &-\big(h[g]\otimes_{\ZZ[V]}(1,0)-[hg]\otimes_{\ZZ[V]}(1,0)+[h]\otimes_{\ZZ[V]}(1,0)\big) \\
      =& [h]\otimes_{\ZZ[V]}\big(g^{-1}\cdot(1,0)-(1,0)\big)-[g]\otimes_{\ZZ[V]}\big(h^{-1}\cdot(1,0)-(1,0)\big) \\
      =& [h]\otimes_{\ZZ[V]}(0,-2g)-[g]\otimes_{\ZZ[V]}(0,-2h).
  \end{align*}
  Thus
  \[\partial(([g|h]-[h|g])\otimes_{\ZZ[V]} 1)=2([g]\otimes h-[h]\otimes g).\]
  Identifying $H_1(V,V)\cong V\otimes V$ and noting that $2$ is a unit in $\ZZ/p\ZZ$, we conclude that the image of the $\wedge^2 V$-summand under $\partial$ is precisely $\wedge^2 V\subset V\otimes V$. 
  
  The $V$-summand of $H_2(V;\ZZ/p\ZZ)$ arises from the $\text{Tor}_1$-term in the universal coefficient sequence, which is generated by cycles
  \[z_g=\sum_{i=0}^{p-1}[g|g^i]\otimes_{\ZZ[V]} 1
\in C_2(V;\ZZ/p\ZZ),\quad g\in V.\]
  Lifting $1\in \ZZ/p\ZZ$ to $(1,0)\in (\ZZ/p\ZZ)\oplus V$ and applying the bar differential, we obtain
\begin{align*}
d(z_g)&=\sum_{i=0}^{p-1}d([g|g^i]\otimes_{\ZZ[V]} (1,0))\\
&=\sum_{i=0}^{p-1}(g[g^i]\otimes_{\ZZ[V]} (1,0)-[g^{i+1}]\otimes_{\ZZ[V]} (1,0)+[g]\otimes_{\ZZ[V]} (1,0)) \\
&=\sum_{i=0}^{p-1}[g^i]\otimes_{\ZZ[V]}\big(g^{-1}\cdot(1,0)-(1,0)\big)+p[g]\otimes_{\ZZ[V]} (1,0)\\
&=\sum_{i=0}^{p-1}[g^i]\otimes_{\ZZ[V]}(0,-2g).
\end{align*}
This is zero in $H_1(V;V)\cong V\otimes V$, and hence $\partial$ vanishes on the $V$-summand. 

Therefore the image of $\partial$ is $\wedge^2 V$, and
  \[H_1(V;(\ZZ/p\ZZ)\oplus V)\cong \text{coker}(\partial)\cong (V\otimes V)/\wedge^2 V\cong\Sym^2(V).\]
  \item We now return to the exact sequence \eqref{les: reduced}, which reduces to 
  \[V\otimes \Sym^2(V) \to H_1(V;\mathfrak{sp}_{2g}(\ZZ/p\ZZ)_{\ZZ/p\ZZ}) \to 0.\]
  Applying the right-exact functor of $\Sp_{2g-2}(\ZZ/p\ZZ)$-coinvariants yields
  \[(V\otimes \Sym^2(V))_{\Sp_{2g-2}(\ZZ/p\ZZ)}
   \longrightarrow
   H_1(V;\mathfrak{sp}_{2g}(\ZZ/p\ZZ)_{\ZZ/p\ZZ})_{\Sp_{2g-2}(\ZZ/p\ZZ)}
  \longrightarrow 0.\]
Since
\[
(V\otimes \Sym^2(V))_{\Sp_{2g-2}(\ZZ/p\ZZ)}=0
\]
by a direct computation, it follows that
\[
H_1(V;\mathfrak{sp}_{2g}(\ZZ/p\ZZ){\ZZ/p\ZZ})_{\Sp{2g-2}(\ZZ/p\ZZ)}=0,
\]
as claimed. \qedhere
 \end{enumerate} 
\end{proof}

This lemma has the following important consequence:
\begin{corollary}\label{cor: surjectivity of Level P to Beta}
    The map induced by group inclusion $\Sp_{2g}(\ZZ,p)\hookrightarrow \Sp_{2g}(\ZZ)^{[\beta]}$ 
    \[H_2(\Sp_{2g}(\ZZ,p);\ZZ)\to H_2(\Sp_{2g}(\ZZ)^{[\beta]};\ZZ)\]
    is surjective.
\end{corollary}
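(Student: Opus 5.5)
We analyze the Lyndon--Hochschild--Serre spectral sequence of the extension \eqref{ses: Sp mod p},
\[E^2_{s,t}=H_s\bigl(\Sp_{2g}(\ZZ/p\ZZ)^{[\beta]};H_t(\Sp_{2g}(\ZZ,p);\ZZ)\bigr)\Longrightarrow H_{s+t}(\Sp_{2g}(\ZZ)^{[\beta]};\ZZ).\]
The map $H_2(\Sp_{2g}(\ZZ,p);\ZZ)\to H_2(\Sp_{2g}(\ZZ)^{[\beta]};\ZZ)$ induced by inclusion factors as
\[H_2(\Sp_{2g}(\ZZ,p);\ZZ)\twoheadrightarrow E^2_{0,2}\twoheadrightarrow E^\infty_{0,2}\hookrightarrow H_2(\Sp_{2g}(\ZZ)^{[\beta]};\ZZ).\]
Its image is therefore exactly the bottom filtration piece $F_0H_2=E^\infty_{0,2}$. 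The filtration of $H_2$ has graded pieces $E^\infty_{0,2}$, $E^\infty_{1,1}$ and $E^\infty_{2,0}$. So surjectivity follows once we show $E^2_{1,1}=0$ and $E^2_{2,0}=0$, since then the corresponding $E^\infty$ terms vanish as well.

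For the $E^2_{2,0}$ term we have $E^2_{2,0}=H_2(\Sp_{2g}(\ZZ/p\ZZ)^{[\beta]};\ZZ)$, which vanishes for $g\ge 4$ and odd $p$ by Proposition~\ref{prop: H_2ofSp(2g,Z/p)beta}.

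For the $E^2_{1,1}$ term we have
\[E^2_{1,1}=H_1\bigl(\Sp_{2g}(\ZZ/p\ZZ)^{[\beta]};H_1(\Sp_{2g}(\ZZ,p);\ZZ)\bigr).\]
By \eqref{eq: H1 principal lie alg}, $H_1(\Sp_{2g}(\ZZ,p);\ZZ)\cong\mathfrak{sp}_{2g}(\ZZ/p\ZZ)$. So this group becomes $H_1(\Sp_{2g}(\ZZ/p\ZZ)^{[\beta]};\mathfrak{sp}_{2g}(\ZZ/p\ZZ))$, which vanishes by Lemma~\ref{lem: twisted H1}.

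The one point needing care, and the main potential obstacle, is that the identification in \eqref{eq: H1 principal lie alg} must be equivariant. The quotient $\Sp_{2g}(\ZZ/p\ZZ)^{[\beta]}$ acts on $H_1(\Sp_{2g}(\ZZ,p);\ZZ)$ through conjugation in $\Sp_{2g}(\ZZ)^{[\beta]}$, while Lemma~\ref{lem: twisted H1} uses the conjugation (adjoint) action on $\mathfrak{sp}_{2g}(\ZZ/p\ZZ)$. These agree because the abelianization map is $X\mapsto\frac1p(X-I)$: for $Y\in\Sp_{2g}(\ZZ)^{[\beta]}$,
\[\tfrac1p(YXY^{-1}-I)=Y\cdot\tfrac1p(X-I)\cdot Y^{-1},\]
and reducing mod $p$ shows that conjugation by $Y$ acts through $\bar Y\in\Sp_{2g}(\ZZ/p\ZZ)^{[\beta]}$ in the adjoint way. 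With this compatibility, both $E^\infty_{1,1}$ and $E^\infty_{2,0}$ vanish, so $H_2(\Sp_{2g}(\ZZ)^{[\beta]};\ZZ)=E^\infty_{0,2}$ and the edge map is surjective.
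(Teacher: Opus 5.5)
Your proposal is correct and is essentially the paper's own proof: the image of the inclusion-induced map is $E^\infty_{0,2}$ of the Hochschild--Serre spectral sequence of \eqref{ses: Sp mod p}, and $E^2_{2,0}$ and $E^2_{1,1}$ vanish by Proposition~\ref{prop: H_2ofSp(2g,Z/p)beta} and Lemma~\ref{lem: twisted H1}. Your explicit check that the identification $H_1(\Sp_{2g}(\ZZ,p);\ZZ)\cong\mathfrak{sp}_{2g}(\ZZ/p\ZZ)$ is equivariant for the conjugation action is correct, and it is a point the paper uses without stating it.
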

\begin{proof}
    Recall the short exact sequence \eqref{ses: Sp mod p}
    \[ 1\to \Sp_{2g}(\ZZ,p) \to \Sp_{2g}(\ZZ)^{[\beta]} \to \Sp_{2g}(\ZZ/p\ZZ)^{[\beta]} \to 1,\]
    whose associated Hochshild-Serre spectral sequence is
    \[E_{p,q}^2\eqref{ses: Sp mod p}=H_p(\Sp_{2g}(\ZZ/p\ZZ)^{[\beta]};H_q(\Sp_{2g}(\ZZ,p);\ZZ))\Rightarrow H_{p+q}(\Sp_{2g}(\ZZ)^{[\beta]};\ZZ).\]
    The image of the map in the statement \[H_2(\Sp_{2g}(\ZZ,p);\ZZ)\to H_2(\Sp_{2g}(\ZZ)^{[\beta]};\ZZ)\] is precisely $E_{0,2}^{\infty}\eqref{ses: Sp mod p}$. Hence, to prove that this map is surjective, it suffices to show that $E_{1,1}^{\infty}\eqref{ses: Sp mod p}$ and $E_{2,0}^{\infty}\eqref{ses: Sp mod p}$ vanish. The term
    \[E_{1,1}^{2}\eqref{ses: Sp mod p}=H_1(\Sp_{2g}(\ZZ/p\ZZ)^{[\beta]};H_1(\Sp_{2g}(\ZZ,p);\ZZ))=H_1(\Sp_{2g}(\ZZ/p\ZZ)^{[\beta]};\mathfrak{sp}_{2g}(\ZZ/p\ZZ))\]
    vanishes by Lemma \ref{lem: twisted H1}, and the term
    \[E_{2,0}^{2}\eqref{ses: Sp mod p}=H_2(\Sp_{2g}(\ZZ/p\ZZ)^{[\beta]};H_0(\Sp_{2g}(\ZZ,p);\ZZ))=H_2(\Sp_{2g}(\ZZ/p\ZZ)^{[\beta]};\ZZ)\]
    vanishes by Proposition \ref{prop: H_2ofSp(2g,Z/p)beta}. This completes the proof.
\end{proof}

\section{The abelianization of $\Mod(S,[\beta])$}
In this section, we prove Theorem~\ref{thm: Abel Mod beta}, which determines the abelianization of
\[
\Mod(S,[\beta])=\text{Stab}_{\Mod(S)}([\beta]), \quad [\beta]\in H_1(S;\ZZ/p\ZZ)^*.
\]
The action of $\Mod(S)$ on $H_1(S;\ZZ)$ induces a surjective homomorphism
\[
\Mod(S)\longrightarrow \Sp_{2g}(\ZZ),
\]
whose kernel is the Torelli subgroup, denoted by $\mathcal{I}(S)$. Restricting this action to $\Mod(S,[\beta])$ yields the short exact sequence
\begin{equation}\label{ses: Torelli kernel}
1\to \mathcal{I}(S)\to \Mod(S,[\beta])\to \Sp_{2g}(\ZZ)^{[\beta]}\to 1.
\end{equation}
The homology groups of $\Sp_{2g}(\ZZ)^{[\beta]}$ needed for our computation were determined in the previous section. The abelianization of the Torelli subgroup was computed by Johnson \cite[Theorem~1,~Theorem~4]{JohnsonTorelliIII} for $g\ge 3$. More precisely, if $S=S_{g,1}$ or $S_g^1$, then
\begin{equation}\label{eq: Abel Torelli nonclosed}
    H_1(\mathcal{I}(S);\ZZ)\cong \wedge^3 H\oplus (2\text{-torsion subgroup}),
\end{equation}
where $H=H_1(S;\ZZ)$. If $S=S_g$, then
\begin{equation}\label{eq: Abel Torelli closed}
    H_1(\mathcal{I}(S);\ZZ)\cong (\wedge^3 H/H)\oplus (2\text{-torsion subgroup}),
\end{equation}
where $H$ is embedded in $\wedge^3 H$ via
\[
H\longrightarrow \wedge^3 H,\qquad
h\longmapsto h\wedge\left(\sum_{i=1}^g a_i\wedge b_i\right),
\]
where $a_1,b_1,\cdots,a_g,b_g$ is a symplectic basis of $H$ satisfying
\[\hat{i}(a_j,b_k)=\delta_j^k,\quad \hat{i}(a_j,a_k)=\hat{i}(b_j,b_k)=0.\]
We assume that $[b_1]=[\beta]$ from now on. Moreover, the isomorphisms \eqref{eq: Abel Torelli nonclosed} and
\eqref{eq: Abel Torelli closed} are $\Sp_{2g}(\ZZ)$-equivariant, where $H$ is endowed with the standard $\Sp_{2g}(\ZZ)$-action.

We begin by determining the image of the Torelli group in the abelianization of
$\Mod(S,[\beta])$.
\begin{proposition}\label{prop: image of H1 Torelli}
    Let $g\ge 3$. The image of the map
    \[H_1(\mathcal{I}(S);\ZZ)\to H_1(\Mod(S,[\beta]);\ZZ)\]
    induced by the group inclusion $\mathcal{I}(S)\to \Mod(S,[\beta])$ is isomorphic to the coinvariants
    \[(\wedge^3H)_{\Sp_{2g}(\ZZ)^{[\beta]}}=(\wedge^3H)/\langle Fa\wedge Fb\wedge Fc - a\wedge b\wedge c :a,b,c \in H,F\in\Sp_{2g}(\ZZ)^{[\beta]} \rangle,\]
    if $S=S_{g,1}$ or $S_g^1$. If $S=S_g$, then the image is isomorphic to
    \[(\wedge^3H/H)_{\Sp_{2g}(\ZZ)^{[\beta]}}.\]
\end{proposition}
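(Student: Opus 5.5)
From the five-term exact sequence of \eqref{ses: Torelli kernel},
\[
H_2(\Sp_{2g}(\ZZ)^{[\beta]};\ZZ)\xrightarrow{d_2} H_1(\mathcal{I}(S);\ZZ)_{\Sp_{2g}(\ZZ)^{[\beta]}}\to H_1(\Mod(S,[\beta]);\ZZ)\to H_1(\Sp_{2g}(\ZZ)^{[\beta]};\ZZ)\to 0,
\]
the image in question is the cokernel of the transgression $d_2$. The proof therefore has two parts: compute the coinvariants of $H_1(\mathcal{I}(S);\ZZ)$, and show that $d_2$ vanishes.

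\emph{Step 1: the $2$-torsion dies in the coinvariants.} Since $p$ is odd, I expect the $2$-torsion summand in \eqref{eq: Abel Torelli nonclosed} and \eqref{eq: Abel Torelli closed} to vanish after taking $\Sp_{2g}(\ZZ)^{[\beta]}$-coinvariants. Johnson identifies this summand, equivariantly, with a module of Boolean polynomials, which is a quotient of $\Mod(S)$-modules. The plan is to use the level-$p$ subgroup $\Sp_{2g}(\ZZ,p)\subset \Sp_{2g}(\ZZ)^{[\beta]}$. Modulo $2$, the group $\Sp_{2g}(\ZZ,p)$ surjects onto $\Sp_{2g}(\ZZ/2\ZZ)$, because $p$ and $2$ are coprime. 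So the coinvariants of the $2$-torsion part are a quotient of its $\Sp_{2g}(\ZZ/2\ZZ)$-coinvariants, and these vanish by Johnson's description: the module is generated by the orbit of a single element, and that element is killed. Alternatively, the whole $2$-torsion part lies in the image of $\Mod(S)$-commutators. Granting this, the coinvariants are $(\wedge^3H)_{\Sp_{2g}(\ZZ)^{[\beta]}}$, respectively $(\wedge^3H/H)_{\Sp_{2g}(\ZZ)^{[\beta]}}$, where equivariance of Johnson's isomorphism is used.

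\emph{Step 2: vanishing of $d_2$.} By Corollary~\ref{cor: surjectivity of Level P to Beta}, the map $H_2(\Sp_{2g}(\ZZ,p);\ZZ)\to H_2(\Sp_{2g}(\ZZ)^{[\beta]};\ZZ)$ is surjective. By naturality of the five-term sequence for the map of extensions
\[
1\to\mathcal{I}(S)\to\Mod(S,p)\to\Sp_{2g}(\ZZ,p)\to 1 \quad\longrightarrow\quad 1\to\mathcal{I}(S)\to\Mod(S,[\beta])\to\Sp_{2g}(\ZZ)^{[\beta]}\to 1,
\]
it then suffices to show that the level-$p$ transgression
\[
H_2(\Sp_{2g}(\ZZ,p);\ZZ)\to H_1(\mathcal{I}(S);\ZZ)_{\Sp_{2g}(\ZZ,p)}
\]
becomes zero after composing with the projection to $\Sp_{2g}(\ZZ)^{[\beta]}$-coinvariants. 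For the level-$p$ transgression I would compare with the full mapping class group extension $1\to\mathcal{I}(S)\to\Mod(S)\to\Sp_{2g}(\ZZ)\to1$. By \eqref{eq: H1 Sp 0} we have $H_1(\Sp_{2g}(\ZZ))=0$, and the coinvariants $(\wedge^3H)_{\Sp_{2g}(\ZZ)}=0$ (use $-I$, or the usual argument), so that transgression is surjective but says nothing directly. A better route is a splitting argument: the Johnson homomorphism $\tau$ extends to a crossed homomorphism on $\Mod(S)$ (Morita's extension, with values in $\tfrac12\wedge^3H$ or in $\wedge^3H\otimes\ZZ[1/2]$). Restricted to $\Mod(S,p)$ and reduced modulo $p$, this gives a homomorphism
\[
\Mod(S,p)\to (\wedge^3 H\otimes\ZZ/p\ZZ)_{\Sp_{2g}(\ZZ,p)} = \wedge^3H\otimes\ZZ/p\ZZ
\]
extending $\tau$ mod $p$. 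A homomorphism from the middle group extending the map on the kernel forces the transgression composed with that quotient to vanish. Since the $\Sp_{2g}(\ZZ)^{[\beta]}$-coinvariants of $\wedge^3H$ are $p$-torsion (they are killed by $p$ because $\Sp_{2g}(\ZZ,p)$ acts, and a transvection $T^p$ acts as $x\mapsto x+p(\cdots)$), they are a quotient of $\wedge^3H\otimes\ZZ/p$, and $d_2$ vanishes there. In the closed case I would use the analogous argument with $\wedge^3H/H$, via the closed-surface version of Morita's crossed homomorphism.

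\emph{Main obstacle.} The main obstacle is Step 2: producing the extension of the Johnson homomorphism mod $p$ to $\Mod(S,p)$, or directly to $\Mod(S,[\beta])$ into the coinvariants, with the correct twisting. The first thing I would try is Morita's crossed homomorphism $k:\Mod(S_{g,1})\to \tfrac12\wedge^3H$. Restricted to $\Mod(S,[\beta])$ and composed with the projection to the coinvariants, the cocycle term should become an honest homomorphism, because the coinvariants are exactly where the twisting disappears. Checking this identity is the key computation.
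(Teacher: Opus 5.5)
\textbf{There is a gap in Step~1.} Your argument needs the torsion submodule $T\subset H_1(\mathcal{I}(S);\ZZ)$ to have vanishing $\Sp_{2g}(\ZZ)^{[\beta]}$-coinvariants, and that is false.

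Take $S=S_g^1$. By Johnson, $T\cong B_2$, the Boolean polynomials of degree $\le 2$ in the functions $\bar x$ ($x\in H\otimes\ZZ/2\ZZ$) on quadratic forms. These satisfy $\overline{x+y}=\bar x+\bar y+x\cdot y$ and $\bar x^2=\bar x$, and $\Sp_{2g}(\ZZ)^{[\beta]}$ acts through its image $\Sp_{2g}(\ZZ/2\ZZ)$. Define
\[
\lambda(1)=0,\qquad \lambda(\bar x)=0,\qquad \lambda(\bar x\bar y)=x\cdot y .
\]
This is well defined: in a basis $\bar e_1,\dots,\bar e_{2g}$ it pairs the degree-two coefficients against the intersection form. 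It is $\Sp_{2g}(\ZZ/2\ZZ)$-invariant, and $\lambda(\bar a_1\bar b_1)=1$. Hence $T_{\Sp_{2g}(\ZZ)^{[\beta]}}\neq 0$, and no orbit generator of $B_2$ is killed.

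Your fallback does not help either. $\Mod(S)$ is perfect for $g\ge 3$, so lying in $[\Mod(S),\Mod(S)]$ says nothing about the commutator subgroup of $\Mod(S,[\beta])$. Also, $H_1(\mathcal{I}(S);\ZZ)\cong\wedge^3H\oplus T$ is not an equivariant splitting. Equivariantly you only have the extension $0\to T\to H_1(\mathcal{I}(S);\ZZ)\to\wedge^3H\to 0$ (Johnson's fibre product with $B_3$), so coinvariants do not split either.

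What is true, and all that is needed, is that $T$ dies in $H_1(\Mod(S,[\beta]);\ZZ)$. The paper proves this through the level-$p$ subgroup. The map $H_1(\mathcal{I}(S);\ZZ)\to H_1(\Mod(S,[\beta]);\ZZ)$ factors through $H_1(\Mod(S,p);\ZZ)$. There the image of $H_1(\mathcal{I}(S);\ZZ)$ is $p$-torsion (Sato, Putman). So the image of a $2$-torsion element is both $2$- and $p$-torsion, hence zero.

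\textbf{Step~2 is fine and is the paper's argument:} the map of extensions, Corollary~\ref{cor: surjectivity of Level P to Beta}, and vanishing of the level-$p$ transgression modulo torsion. The paper quotes that last fact from Sato and Putman; you reprove it via Morita's extension reduced mod $p$.

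The ``key computation'' you flag as the main obstacle is automatic. For a crossed homomorphism $k$ and $f,h\in\Mod(S,[\beta])$ you have $k(fh)=k(f)+f\cdot k(h)\equiv k(f)+k(h)$ in $\Sp_{2g}(\ZZ)^{[\beta]}$-coinvariants. The factor $\tfrac12$ is harmless since $p$ is odd. Applied directly on $\Mod(S,[\beta])$, this shows that the composite
\[
H_1(\mathcal{I}(S);\ZZ)_{\Sp_{2g}(\ZZ)^{[\beta]}}\to H_1(\Mod(S,[\beta]);\ZZ)\to(\wedge^3H)_{\Sp_{2g}(\ZZ)^{[\beta]}}
\]
is the natural projection (use $\wedge^3H/H$ in the closed case). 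So the kernel of the first map lies in the image of $T$, and you do not even need Corollary~\ref{cor: surjectivity of Level P to Beta}.

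Either way, Step~2 only controls things modulo torsion. The proof therefore needs a correct replacement for Step~1, such as the $p$-torsion argument above.
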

\begin{proof}
    First, we show that the $2$-torsion subgroup of $H_1(\mathcal{I}(S);\ZZ)$ maps trivially to $H_1(\Mod(S,[\beta]);\ZZ)$. By definition, $\Mod(S,[\beta])$ contains the level-$p$ mapping class group
   \[\Mod(S,p)=\text{Ker}(\Mod(S)\to \Aut(H_1(S;\ZZ/p\ZZ)).\]
    Therefore, the homomorphism
    \begin{equation}\label{eq: H1 Torelli to H1}
        H_1(\mathcal{I}(S);\ZZ)\to H_1(\Mod(S,[\beta]);\ZZ)
    \end{equation}
    factors as
    \[H_1(\mathcal{I}(S);\ZZ)\to H_1(\Mod(S,p),\ZZ)\to H_1(\Mod(S,[\beta]);\ZZ).\]
    It is known (see \cite[Lemma 11.7]{SatoAbelianization}, \cite[Lemma 4.3]{AndyAbelianization}) that the image of 
    \[H_1(\mathcal{I}(S);\ZZ)\to H_1(\Mod(S,p),\ZZ)\]
    consists of $p$-torsion elements. Hence the same holds for the image of \eqref{eq: H1 Torelli to H1}. 
    
    Since $\Sp_{2g}(\ZZ)^{[\beta]}$ acts trivially on $H_1(\Mod(S,[\beta]);\ZZ)$, the map \eqref{eq: H1 Torelli to H1} factors through 
    \[H_1(\mathcal{I}(S);\ZZ)_{\Sp_{2g}(\ZZ)^{[\beta]}}\to H_1(\Mod(S,[\beta]);\ZZ).\]
    It remains to show that this map is injective after quotienting out the $2$-torsion subgroup of $H_1(\mathcal{I}(S);\ZZ)$. The proof strategy used in \cite{SatoAbelianization} and \cite{AndyAbelianization} for the abelianization of the level-$p$ mapping class group does not apply in our setting. Indeed, their arguments rely on the nontriviality of a wrong-way map, called the relative Johnson homomorphism introduced in \cite[Theorem~5.8]{BFP}. In our case, however,
    the relative Johnson homomorphism associated to $\Mod(S,[\beta])$ is trivial.
    
    Instead, we prove the injectivity by comparing spectral sequences and performing a diagram chase, together with an essential use of Corollary \ref{cor: surjectivity of Level P to Beta}. Consider the following commutative diagram:
 \begin{equation}\label{eq: map btw two short exact sequences}
    \xymatrix{1 \ar[r] & \mathcal{I}(S) \ar[r] \ar[d] & \Mod(S,p) \ar[r] \ar[d] & \Sp_{2g}(\ZZ,p) \ar[r] \ar[d] & 1 \\
    1 \ar[r] & \mathcal{I}(S) \ar[r] & \Mod(S,[\beta]) \ar[r] & \Sp_{2g}(\ZZ)^{[\beta]} \ar[r] & 1.}
\end{equation}
This induces a morphism between the Hochschild-Serre spectral sequences associated to the first and second rows. In particular, it induces a commutative diagram between the corresponding five-term exact sequences.
\begin{equation}\label{eq: map btw five term exact sequence}
    \xymatrix{
    \cdots \ar[r] &H_2(\Sp_{2g}(\ZZ,p);\ZZ) \ar[r] \ar[d] & H_1(\mathcal{I}(S);\ZZ)_{\Sp_{2g}(\ZZ,p)} \ar[r] \ar[d] & H_1(\Mod(S,p);\ZZ) \ar[r] \ar[d] & \cdots \\
    \cdots \ar[r] &H_2(\Sp_{2g}(\ZZ)^{[\beta]};\ZZ) \ar[r] & H_1(\mathcal{I}(S);\ZZ)_{\Sp_{2g}(\ZZ)^{[\beta]}} \ar[r] & H_1(\Mod(S,[\beta]);\ZZ) \ar[r] &\cdots.}
\end{equation}
The map in the first row of \eqref{eq: map btw five term exact sequence}
\[H_2(\Sp_{2g}(\ZZ,p);\ZZ) \to H_1(\mathcal{I}(S);\ZZ)_{\Sp_{2g}(\ZZ,p)}\]is trivial after quotienting out the \(2\)-torsion subgroup of $H_1(\mathcal{I}_g;\ZZ)$ (see
\cite[Lemma~11.10]{SatoAbelianization} or \cite[Theorem~4.2]{AndyAbelianization}).
Moreover, Corollary \ref{cor: surjectivity of Level P to Beta} implies that the first vertical map in \eqref{eq: map btw five term exact sequence},
\[H_2(\Sp_{2g}(\ZZ,p);\ZZ)\to H_2(\Sp_{2g}(\ZZ)^{[\beta]};\ZZ)\]
is surjective. Therefore, by diagram chasing, the map in the second row of
\eqref{eq: map btw five term exact sequence},
\[H_2(\Sp_{2g}(\ZZ)^{[\beta]};\ZZ) \to H_1(\mathcal{I}(S);\ZZ)_{\Sp_{2g}(\ZZ)^{[\beta]}}\]
is also trivial after quotienting out the $2$-torsion subgroup of $H_1(\mathcal{I}(S);\ZZ)$. This completes the proof by the exactness of the five term exact sequence.
\end{proof}

We then compute these $\Sp_{2g}(\ZZ)^{[\beta]}$-coinvariants as follows.
\begin{proposition}\label{prop: coinvariants}
    Let $g\ge 3$ and let $p$ be an odd prime. Then 
    \[(\wedge^3H)_{\Sp_{2g}(\ZZ)^{[\beta]}}\cong \ZZ/p\ZZ\]
    generated by the class of $a_1\wedge a_2 
    \wedge b_2$. Moreover,
    \[(\wedge^3H/H)_{\Sp_{2g}(\ZZ)^{[\beta]}}\cong\begin{cases}
         \ZZ/p\ZZ & \text{ if } g \equiv 1 \pmod p; \\
        0  & \text{ if } g \not\equiv 1 \pmod p.
    \end{cases}\]
\end{proposition}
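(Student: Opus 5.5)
We will compute the coinvariants in two stages: first produce enough relations to kill everything except one class, then show that class is genuinely of order $p$ by constructing an explicit $\Sp_{2g}(\ZZ)^{[\beta]}$-invariant functional $\wedge^3H\to\ZZ/p\ZZ$. Throughout we use elements of $\Sp_{2g}(\ZZ)^{[\beta]}$ of two kinds. The first kind is the stabilizer of $b_1$ in $\Sp_{2g}(\ZZ)$, which contains $\Sp_{2g-2}(\ZZ)$ acting on $\langle a_2,b_2,\dots,a_g,b_g\rangle$, transvections $T_{b_1}$, and transvections $T_{b_1+v}$-type elements of the form used in the proof of Proposition \ref{prop: abelSp(2g,Z)beta}. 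The second kind is the level-$p$ elements, e.g.\ $T_{a_1}^p$ and more generally $\Sp_{2g}(\ZZ,p)$. Applying $F-1$ for $F\in\Sp_{2g}(\ZZ,p)$ shows immediately that $p\cdot(\wedge^3H)\subseteq$ relations, since for a transvection $T_x^p$ we get $(T_x^p-1)(x'\wedge y\wedge z)=p\,\hat i(x',x)\, x\wedge y\wedge z+\dots$; so the coinvariants are an $\mathbb{F}_p$-vector space, a quotient of $(\wedge^3 \bar H)_{\Sp_{2g}(\ZZ/p\ZZ)^{[\beta]}}$ with $\bar H=H\otimes\ZZ/p\ZZ$.

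Next we kill basis vectors. Decompose $\wedge^3\bar H$ according to the filtration $\langle b_1\rangle\subset b_1^\perp\subset \bar H$: writing $W=\langle a_2,\dots,b_g\rangle$, we have $\wedge^3\bar H=\wedge^3W\oplus (a_1\wedge\wedge^2W)\oplus(b_1\wedge\wedge^2 W)\oplus(a_1\wedge b_1\wedge W)$. Using $\Sp(W)$-coinvariants (for $g\ge3$, $\dim W\ge4$): $(\wedge^3W)_{\Sp(W)}=0$ and $W_{\Sp(W)}=0$, while $(\wedge^2W)_{\Sp(W)}=\ZZ/p\ZZ$ spanned by $\omega_W=\sum_{i\ge2}a_i\wedge b_i$ (the class of any $a_i\wedge b_i$). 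So only $a_1\wedge a_2\wedge b_2$ and $b_1\wedge a_2\wedge b_2$ can survive. To kill $b_1\wedge a_2\wedge b_2$ apply $F$ with $F(a_1)=a_1+a_3$-type correction: concretely take $F$ with $F(b_3)=b_3+b_1$, $F(a_1)=a_1-a_3$ (as in item (2) of Proposition \ref{prop: abelSp(2g,Z)beta}); then $(F-1)(a_3\wedge a_2\wedge b_2)$ is unaffected, but $(F-1)(b_3\wedge a_2\wedge b_2)=b_1\wedge a_2\wedge b_2$, killing it. Applying the same $F$ to $a_1\wedge a_2\wedge b_2$ gives $-a_3\wedge a_2\wedge b_2$, already zero, so $a_1\wedge a_2\wedge b_2$ is not killed this way; a check over the remaining generator types (transvections along $b_1$, $a_1\mapsto a_1+b_1$ etc.) shows each $F-1$ lands in the span of classes already zero. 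Hence the coinvariants are cyclic, generated by $a_1\wedge a_2\wedge b_2$.

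For nontriviality we define $\psi:\wedge^3H\to\ZZ/p\ZZ$ by contraction: $\psi(x\wedge y\wedge z)$ is the coefficient obtained by contracting with $b_1$ twice-free form, namely $\psi(\xi)=\langle C_{b}(\xi),\omega\rangle$ where $C_b(x\wedge y\wedge z)=\hat i(x,b)\,y\wedge z-\hat i(y,b)\,x\wedge z+\hat i(z,b)\,x\wedge y\in\wedge^2H$ and then we pair $\wedge^2H\to\ZZ$ by the symplectic form $y\wedge z\mapsto\hat i(y,z)$, reducing mod $p$. Since every $F\in\Sp_{2g}(\ZZ)^{[\beta]}$ preserves $\hat i$ and satisfies $F(b)\equiv b\pmod p$, $\psi$ is invariant mod $p$; and $\psi(a_1\wedge a_2\wedge b_2)=\hat i(a_2,b_2)=1\neq0$. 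This proves $(\wedge^3H)_{\Sp_{2g}(\ZZ)^{[\beta]}}\cong\ZZ/p\ZZ$.

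For the closed case, the image of $H$ is generated (as a module, hence in coinvariants) by $h\wedge\omega$; in coinvariants every $h\wedge a_i\wedge b_i$ with $h\notin\{a_1\text{-direction}\}$ dies, so the image of $H$ is spanned by the class of $a_1\wedge\omega=\sum_{i\ge2}a_1\wedge a_i\wedge b_i$, which equals $(g-1)[a_1\wedge a_2\wedge b_2]$ (each term is $\Sp(W)$-conjugate to the $i=2$ term). Equivalently $\psi(a_1\wedge\omega)=g-1$. Thus the quotient is $\ZZ/p\ZZ$ if $g\equiv1\pmod p$ and $0$ otherwise. The main obstacle is the bookkeeping in the killing step, specifically making sure no relation ever hits $a_1\wedge a_2\wedge b_2$ — but this is exactly what the invariant functional $\psi$ certifies, so the careful part is only the generation statement.
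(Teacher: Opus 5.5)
Your proposal is correct and is essentially the paper's argument. Explicit elements of $\Sp_{2g}(\ZZ)^{[\beta]}$ reduce the coinvariants to a quotient of $\ZZ/p\ZZ$ generated by $[a_1\wedge a_2\wedge b_2]$; your level-$p$ reduction and $\Sp(W)$-block bookkeeping only reorganize the paper's four cases. Your contraction functional is the paper's $\phi$ up to sign, and the image of $H$ is generated by $(g-1)[a_1\wedge a_2\wedge b_2]$, as in the paper. One small slip: $(\wedge^2W)_{\Sp(W)}$ is spanned by the class of $a_2\wedge b_2$, not by $[\omega_W]=(g-1)[a_2\wedge b_2]$, which vanishes when $p\mid g-1$; your later steps use the correct version.
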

\begin{proof}
    We first compute $(\wedge^3H)_{\Sp_{2g}(\ZZ)^{[\beta]}}$. Since $\wedge^3H$ is generated by $x\wedge y \wedge z$ for $x,y,z$ distinct elements in the symplectic basis $\{a_1,b_1,\cdots,a_g,b_g\}$, We successively simplify these generators in the coinvariants as follows.
    \begin{enumerate}
    \item Let $x,y,z$ be distinct elements in $\{a_2,b_2,\cdots,a_g,b_g\}$. Let $\widehat{x}$ be the symplectic basis vector such that $\widehat{i}(x,\widehat{x})=\pm 1$. Let $F\in \Sp_{2g}(\ZZ)^{[\beta]}$ be the transvection about $\widehat{x}$, then 
    \[F\cdot(x\wedge y \wedge z)-x\wedge y \wedge z=\pm \widehat{x}\wedge y \wedge z.\]
    \item Let $y,z$ be distinct elements in $\{a_2,b_2,\cdots,a_g,b_g\}$. Let $F\in \Sp_{2g}(\ZZ)^{[\beta]}$ be the transvection about $b_1$, then 
    \[F\cdot(a_1\wedge y \wedge z)-a_1\wedge y \wedge z=-b_1\wedge y \wedge z.\]    
    \item For any $x\in\{a_2,b_2,\cdots,a_g,b_g\}$, let $\widehat{x}$ be the symplectic basis vector such that $\widehat{i}(x,\widehat{x})=\pm 1$. Let $F\in \Sp_{2g}(\ZZ)^{[\beta]}$ be the transvection about $x$, then 
    \[F\cdot(a_1\wedge b_1 \wedge \widehat{x})-a_1\wedge b_1 \wedge \widehat{x}=\pm a_1 \wedge b_1 \wedge x.\]
    \item Let $y,z$ be distinct elements in $\{a_2,b_2,\cdots,a_g,b_g\}$. Let $F\in \Sp_{2g}(\ZZ)^{[\beta]}$ be the $p$-th power of the transvection about $a_1$, then 
    \[F\cdot(b_1\wedge y \wedge z)-b_1\wedge y \wedge z=p a_1\wedge y \wedge z.\]
    Furthermore, if $\hat{i}(y,z)=0$, let $F\in \Sp_{2g}(\ZZ)^{[\beta]}$ be the the transvection about $z$, and let $\widehat{z}$ be the symplectic basis vector such that $\widehat{i}(z,\widehat{z})=\pm 1$, then
    \[F\cdot (a_1\wedge y\wedge\widehat{z})-a_1\wedge y\wedge\widehat{z}=\pm a_1\wedge y\wedge z.\]
    If $\hat{i}(y,z)=1$, may as well assume that $y=a_i$ and $z=b_i$ for some $i\ge 2$. Then let $F\in \Sp_{2g}(\ZZ)^{[\beta]}$ be the factor swap that exchanges $a_i,b_i$ and $a_j,b_j$ ($i\neq j\ge 2$), then
    \[F\cdot (a_1\wedge a_i\wedge b_i)-a_1\wedge a_i\wedge b_i=a_1\wedge a_j\wedge b_j-a_1\wedge a_i\wedge b_i.\]
    \end{enumerate}
    This implies that $(\wedge^3 H)_{\Sp_{2g}(\ZZ)^{[\beta]}}$ is a quotient of $\ZZ/p\ZZ$ generated by the class \[[a_1\wedge a_2\wedge b_2]=[a_1\wedge a_i\wedge b_i],\quad \text{for all }2\le i \le g.\] Moreover, we construct the following homomorphism of abelian groups
    \begin{equation}\label{eq: map phi}
        \begin{aligned}
        \phi:\wedge^3 H &\to \ZZ/p\ZZ \\
         x \wedge y \wedge z &\mapsto \hat{i}([\beta],x)\cdot \hat{i}(y,z)+\hat{i}([\beta],y)\cdot \hat{i}(z,x)+\hat{i}([\beta],z)\cdot \hat{i}(x,y) \pmod p.
    \end{aligned}
    \end{equation}
    This map is surjective since
    \[\phi(a_1\wedge a_2\wedge b_2)=\hat{i}([\beta],a_1)\cdot \hat{i}(a_2,b_2)+\hat{i}([\beta],a_2)\cdot \hat{i}(b_2,a_1)+\hat{i}([\beta],b_2)\cdot \hat{i}(a_1,a_2)=-1 \pmod p.\]
    Note that the homomorphism $\phi$ is invariant under the action of $\Sp_{2g}(\ZZ)^{[\beta]}$, since every $F\in \Sp_{2g}(\ZZ)^{[\beta]}$ fixes the class $[\beta]$ and preserves the symplectic pairing $\widehat{i}$. Therefore, $\phi$ induces a surjective homomorphism
    \[(\wedge^3 H)_{\Sp_{2g}(\ZZ)^{[\beta]}}\twoheadrightarrow\ZZ/p\ZZ.\]
    It follows that
    \[(\wedge^3 H)_{\Sp_{2g}(\ZZ)^{[\beta]}}\cong\ZZ/p\ZZ.\] 
    
    To determine the coninvariants $(\wedge^3H/H)_{\Sp_{2g}(\ZZ)^{[\beta]}}$, it suffices to compute the image of $H$ inside $(\wedge^3 H)_{\Sp_{2g}(\ZZ)^{[\beta]}}$. The image of a basis element $h\in H$ is
    \[[h\wedge(\sum_{i=1}^g a_i\wedge b_i)]=\begin{cases}
         (g-1)[a_1\wedge a_2\wedge b_2], & \text{ if } h=a_1; \\
        0,  & \text{ if } h\in \{b_1,a_2,b_2,\cdots,a_g,b_g\}.
    \end{cases}\]
    Hence, if $g\not\equiv 1 \pmod p$, we have
    \[(\wedge^3 H/H)_{\Sp_{2g}(\ZZ)^{[\beta]}}=0.\]
    If $g\equiv 1 \pmod p$, noticing that $H$ lies in the kernel of the surjection \eqref{eq: map phi}, we have
    \[(\wedge^3 H/H)_{\Sp_{2g}(\ZZ)^{[\beta]}}=\ZZ/p\ZZ.\qedhere\]
\end{proof}

\begin{remark}\label{rmk: represent by bpm}
In the abelianization of the Torelli subgroup $\mathcal{I}(S)$, the class $a_1\wedge a_2\wedge b_2\in \wedge^3H$ is represented by the bounding pair map $T_{\alpha}T_{\alpha'}^{-1}\in \mathcal{I}(S)$ where $\alpha$, $\alpha'$ are simple closed curves indicated in Figure \ref{fig: surface with aab}. 

\begin{figure}
    \centering\includegraphics[width=0.6\linewidth]{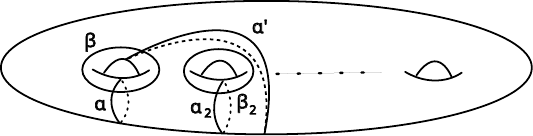}\caption{\label{fig: surface with aab}}\end{figure}
\end{remark}

We now prove Theorem \ref{thm: Abel Mod beta} that determines the abelianization of $\Mod(S,[\beta])$. 
\begin{proof}[Proof of Theorem~\ref{thm: Abel Mod beta}]  
Recall the short exact sequence \eqref{ses: Torelli kernel}
\[   1\to \mathcal{I}(S)\to \Mod(S,[\beta]) \to \Sp_{2g}(\ZZ)^{[\beta]} \to 1.\]
The associated five-term exact sequence is 
\begin{equation}\label{les: kernelTorelli}
  \begin{aligned}
     & H_2(\Mod(S_g,[\beta]);\ZZ)\to H_2(\Sp_{2g}(\ZZ)^{[\beta]};\ZZ) \to H_1(\mathcal{I}(S);\ZZ)_{\Sp_{2g}(\ZZ)^{[\beta]}} \to \\
      \to &H_1(\Mod(S,[\beta]);\ZZ)\to H_1(\Sp_{2g}(\ZZ)^{[\beta]};\ZZ)\to 0.
  \end{aligned} 
  \end{equation}
  By Proposition \ref{prop: abelSp(2g,Z)beta}, we have
  \[H_1(\Sp_{2g}(\ZZ)^{[\beta]};\ZZ)\cong \ZZ/p\ZZ,\quad\text{for }g\ge 4.\] 
  By Proposition \ref{prop: image of H1 Torelli} and Proposition \ref{prop: coinvariants}, the image of the map
  \[H_1(\mathcal{I}(S);\ZZ)_{\Sp_{2g}(\ZZ)^{[\beta]}} 
      \to H_1(\Mod(S,[\beta]);\ZZ)\]
  is isomorphic to $\ZZ/p\ZZ$, except in the case where $S=S_g$ and $g\not\equiv 1 \pmod p$, in which case the image is trivial. Therefore, the five-term exact sequence \eqref{les: kernelTorelli} implies that
  \[H_1(\Mod(S,[\beta]);\ZZ)=\ZZ/p\ZZ,\]
  if $S=S_g$ and $g\not\equiv 1 \pmod p$. In all other cases, it yields a short exact sequence
\[0\to \ZZ/p\ZZ\to H_1(\Mod(S,[\beta]);\ZZ) \to \ZZ/p\ZZ\to 0.\]
It remains to show that this short exact sequence splits. It suffices to construct a surjective homomorphism
\[\Mod(S,[\beta])\longrightarrow \ZZ/p\ZZ\oplus\ZZ/p\ZZ.\]

The first component is the homomorphism arising from the abelianization map of $\Sp_{2g}(\ZZ)^{[\beta]}$ in the proof of Proposition \ref{prop: abelSp(2g,Z)beta}, which gives a surjection
\[\begin{aligned}
\chi:\Mod(S,[\beta])&\longrightarrow \ZZ/p\ZZ,\\
f&\longmapsto \frac{1}{p}\widehat{i}(f\cdot[\beta],[\beta])\pmod p,
\end{aligned}\]
where $\widehat{i}$ denotes the algebraic intersection pairing.

For the second component, we use Morita's crossed homomorphism
\[\kappa:\Mod(S_{g,1})\longrightarrow H_1(S_g;\ZZ)\]
from \cite[Section 6]{MoritaI}. Recall that, for
$\gamma\in\pi_1(S_g)$, viewed as an element of the point-pushing subgroup of
$\Mod(S_{g,1})$, Morita's crossed homomorphism satisfies
\begin{equation}\label{eq: image of point pushing}
    \kappa(\gamma)=(2-2g)[\gamma],
\end{equation}
by \cite[Proposition 3.1]{MoritaII}.

We define
\[\begin{aligned}
\psi:\Mod(S,[\beta])&\longrightarrow \ZZ/p\ZZ,\\
f&\longmapsto \widehat{i}\bigl(\kappa(\widetilde{f}),[\beta]\bigr)\pmod p,
\end{aligned}\]
where $\widetilde f$ is defined as follows:
\[\widetilde f=
\begin{cases}
f, & \text{if } S=S_{g,1},\\[2mm]
\text{image of $f$ under }
\Mod(S_g^1)\xrightarrow{\operatorname{Cap}}\Mod(S_{g,1}),
& \text{if } S=S_g^1,\\[2mm]
\text{any lift of $f$ under }
\Mod(S_{g,1})\xrightarrow{\operatorname{Forget}}\Mod(S_g),
& \text{if } S=S_g,\ g\equiv1\pmod p.
\end{cases}\]
In the last case, $\psi$ is independent of the choice of lifts of $f$. Indeed, any two lifts $\widetilde f_1,\widetilde f_2\in\Mod(S_{g,1})$ of $f$ differ by an element of the point-pushing subgroup:
\[\widetilde{f}_1\widetilde{f}_2^{-1}\in \pi_1(S_g)\subset \Mod(S_{g,1})\]
in the point pushing subgroup. Thus, by \eqref{eq: image of point pushing}, we have
\[\kappa(\widetilde{f}_1\widetilde{f}_2^{-1})=(2-2g)[\gamma],\qquad \text{for some }\gamma\in\pi_1(S_g).\]
Therefore we have
\[\kappa(\widetilde{f}_1)=\kappa(\widetilde{f}_1\widetilde{f}_2^{-1}\widetilde{f}_2)=\kappa(\widetilde{f}_1\widetilde{f}_2^{-1})+\widetilde{f}_1\widetilde{f}_2^{-1}\cdot\kappa(\widetilde{f}_2)=(2-2g)[\gamma]+\kappa(\widetilde{f}_2).\]
Since $g\equiv1\pmod p$, we have $(2-2g)\equiv0\pmod p$. Thus $\kappa(\widetilde f_1)$ and $\kappa(\widetilde f_2)$ have the same intersection pairing with $[\beta]$ modulo $p$.

Since $\kappa$ is a crossed homomorphism, we have
\[\kappa(\widetilde{f}\widetilde{h})
=\kappa(\widetilde{f})+\widetilde{f}\cdot\kappa(\widetilde{h}).\]
It follows that $\psi$ is a homomorphism:
\begin{align*}
\psi(fh)
&=\widehat{i}\bigl(\kappa(\widetilde{f}\widetilde{h}),[\beta]\bigr) \\
&=\widehat{i}\bigl(\kappa(\widetilde{f})
+\widetilde{f}\cdot\kappa(\widetilde{h}),[\beta]\bigr) \\
&=\widehat{i}\bigl(\kappa(\widetilde{f}),[\beta]\bigr)
+\widehat{i}\bigl(\widetilde{f}\cdot \kappa(\widetilde{h}),[\beta]\bigr) \\
&=\widehat{i}\bigl(\kappa(\widetilde{f}),[\beta]\bigr)
+\widehat{i}\bigl(\kappa(\widetilde{h}),\widetilde{f}^{-1}\cdot [\beta]\bigr) \\
&=\psi(f)+\psi(h),
\end{align*}
where the last equality follows from the fact that $f$ fixes
$[\beta]$ by definition.

We now verify that the map
\begin{equation}\label{eq: abel 1}
    (\chi,\psi):\Mod(S,[\beta])
\longrightarrow \ZZ/p\ZZ\oplus\ZZ/p\ZZ
\end{equation}
is surjective. Let $\alpha,\alpha'$ be the curves shown in
Figure \ref{fig: surface with aab}. A direct application of the combinatorial definition of $\kappa$ in \cite[Section~6]{MoritaI} gives
\[\kappa(\widetilde{T_\alpha^p})=p[\alpha],\quad \kappa(\widetilde{T_\alpha T_{\alpha'}^{-1}})=2[\alpha].\]
It follows that
\[(\chi,\psi)(T_\alpha^p)=
\left(\frac{1}{p}\widehat{i}(T_\alpha^p\cdot[\beta],[\beta])\pmod p,\,
\widehat{i}\bigl(\kappa(\widetilde{T_\alpha^p}),[\beta]\bigr)\pmod p
\right)
=(1,0),\]
and
\[(\chi,\psi)(T_\alpha T_{\alpha'}^{-1})=
\left(\frac{1}{p}\widehat{i}\bigl((T_\alpha T_{\alpha'}^{-1})\cdot[\beta],[\beta]\bigr)\pmod p,\,
\widehat{i}\bigl(\kappa(\widetilde{T_\alpha T_{\alpha'}^{-1}}),[\beta]\bigr)\pmod p
\right)
=(0,2).\]
Since $p$ is an odd prime, we obtain that $(\chi,\psi)$ is surjective, which completes the proof.
\end{proof}

\section{The abelianization of $\Mod(\widetilde{S},\sigma)$}
In this section, we prove Theorem \ref{thm: Abel Mod Sigma}, which determines the abelianization of 
\[\Mod(\widetilde{S},\sigma)= \text{C}_{\Mod(\widetilde{S})}(\sigma),\]
where $\sigma$ is a generator of the deck transformation group of the unbranched cyclic $p$-fold cover $\widetilde{S}\to S$. A key ingredient is the image of the Prym representation associated to the cover $\widetilde{S}\to S$, which is the action of $\Mod(\widetilde{S},\sigma)$ on $H_1(\widetilde{S};\ZZ)$. 

Let $G\cong\ZZ/p\ZZ$ be the deck transformation group of $\widetilde{S}\to S$, where $p$ is any positive integer (not necessarily prime). In the case $S=S_g$, by the Chevalley-Weil Theorem 
\[H_1(\widetilde{S};\QQ)\cong \QQ^2\oplus (\QQ[G])^{2g-2}\cong \QQ^{2g}\oplus \bigoplus_{0\neq C\in X(G)} \QQ(\zeta_C)^{2g-2},\]
as a $\QQ[G]$-module, where $X(G)$ denotes the set of cyclic factor groups of $G$ and $\zeta_C$ is a primitive $|C|$-th root of unity. By Schur's lemma, the action of $\Mod(\widetilde{S},\sigma)$ preserves each isotypic component. On the trivial component $\QQ^{2g}$, it is the action of $\Mod(\widetilde{S},\sigma)$ on $H_1(S;\QQ)$, hence its image is $\Sp_{2g}(\ZZ)^{[\beta]}$. 

For each nontrivial factor group $C$ of $G$, the Prym representation restricted to the component $\QQ(\zeta_C)^{2g-2}$ preserves a nondegenerate sesquilinear skew-Hermitian $\QQ(\zeta_C)$-valued form induced by the $\QQ[G]$-valued augmented intersection form 
\begin{align*}
    H_1(\widetilde{S};\QQ)\times H_1(\widetilde{S};\QQ) &\to \QQ[G] \\
    \langle a,b\rangle&\coloneq \sum\limits_{f\in G}\widehat{i}(a,fb)f,
\end{align*}
where $\widehat{i}$ is the algebraic intersection pairing on $H_1(\widetilde{S};\ZZ)$. Let $U^\#_{2g-2}(\ZZ[\zeta_C])$ denote the subgroup of $\Aut(\QQ(\zeta_C)^{2g-2})$ consisting of automorphisms that preserve this
form, have entries in $\ZZ[\zeta_C]$, and have determinant equal to an even power of $\zeta_C$. Looijenga \cite[Theorem 2.4]{PrymRep} proved that $U^\#_{2g-2}(\ZZ[\zeta_C])$ is precisely the image of the Prym representation
on this isotypic component. Consequently, the Prym representation can be written as 
\begin{equation}\label{eq: image of the Prym rep}
    \text{Prym}:\Mod(\widetilde{S},\sigma)\longrightarrow \Sp_{2g}(\ZZ)^{[\beta]}\times \prod_{0\neq C\in X(G)} U^\#_{2g-2}(\ZZ[\zeta_C]).
\end{equation}

We now prove Theorem~\ref{thm: Abel Mod Sigma}.
\begin{proof}[Proof of Theorem \ref{thm: Abel Mod Sigma}]
    If $S=S_{g,1}$ or $S_g^1$, then
    \[\Mod(\widetilde{S},\sigma)\cong \Mod(S,[\beta]),\]
    so its abelianization is isomorphic to $\ZZ/p\ZZ\oplus \ZZ/p\ZZ$ by Theorem~\ref{thm: Abel Mod beta}. It therefore remains to consider the case $S=S_g$, for which we have the central extension
    \begin{equation}\label{eq: central extension}
        1\to \langle\sigma\rangle \to \Mod(\widetilde{S},\sigma) \to \Mod(S,[\beta])\to 1.
    \end{equation}
    First, by forgetting all the punctures of $\widetilde{S_{g,1}}$, we obtain a surjection
    \[\Mod(\widetilde{S_{g,1}},\sigma)\to \Mod(\widetilde{S},\sigma).\]
    This implies that $H_1(\Mod(\widetilde{S},\sigma);\ZZ)$ is a quotient of
    \[H_1(\Mod(\widetilde{S_{g,1}},\sigma);\ZZ)\cong \ZZ/p\ZZ\oplus \ZZ/p\ZZ.\]
    If $g \equiv 1 \pmod p$, since \eqref{eq: central extension} induces a surjection
    \[H_1(\Mod(\widetilde{S},\sigma);\ZZ) \to H_1(\Mod(S,[\beta]);\ZZ)\cong \ZZ/p\ZZ\oplus \ZZ/p\ZZ,\]
    we conclude that
    \[H_1(\Mod(\widetilde{S},\sigma);\ZZ) \cong \ZZ/p\ZZ\oplus \ZZ/p\ZZ.\]
    If $g \not\equiv 1 \pmod p$, in this case
    \[H_1(\Mod(S,[\beta]);\ZZ)\cong H_1(\Sp_{2g}(\ZZ)^{[\beta]};\ZZ)\cong \ZZ/p\ZZ.\]
    Since $p$ is a prime, there is only one nontrivial cyclic factor group of the deck transformation group $\ZZ/p\ZZ$. In this case, the Prym representation \eqref{eq: image of the Prym rep} associated to the cover $\widetilde{S}\to S$ simplifies to
    \begin{equation}\label{eq: Prym rep for prime p}
        \text{Prym}:\Mod(\widetilde{S},\sigma)\longrightarrow \Sp_{2g}(\ZZ)^{[\beta]}\times U^{\#}_{2g-2}(\ZZ[\zeta_p]),
    \end{equation}
    where $\zeta_p$ is a $p$-th root of unity. The determinant map \[\det:U^{\#}_{2g-2}(\ZZ[\zeta_p])\longrightarrow
      \langle \zeta_p\rangle\cong \ZZ/p\ZZ\]
      is surjective since $p$ is odd. Then \eqref{eq: Prym rep for prime p} induces the following homomorphism
      \[\Phi:\Mod(\widetilde{S},\sigma)\xrightarrow{\operatorname{Prym}}
\Sp_{2g}(\mathbb Z)^{[\beta]}
\times U^{\#}_{2g-2}(\mathbb Z[\zeta_p])
\xrightarrow{H_1(-;\mathbb Z)\times\det} H_1(\Sp_{2g}(\ZZ)^{[\beta]};\ZZ)\oplus \langle \zeta_p\rangle\cong \ZZ/p\ZZ\oplus \ZZ/p\ZZ.\]
    We now show that $\Phi$ is surjective. First, let $\alpha$ be a simple closed curve in $S$ representing the basis vector $a_1\in H_1(S;\ZZ)$ and intersecting $\beta$ once. Let $\widetilde{\alpha}$ be the preimage of $\alpha$ under the covering map $\widetilde{S}\to S$, which is a single simple closed curve. By construction, the Dehn twist $T_{\widetilde{\alpha}}$ lies in $\Mod(\widetilde{S},\sigma)$, and its image under the Prym representation \eqref{eq: Prym rep for prime p} is
    \[\text{Prym}(T_{\widetilde{\alpha}})=(T_{a_1}^p,I).\]
    Therefore,
    \[\Phi(T_{\widetilde{\alpha}})=(1,0),\]
    since the class of $T_{a_1}^p$ generates $H_1(\Sp_{2g}(\ZZ))^{[\beta]};\ZZ)$ by Proposition \ref{prop: abelSp(2g,Z)beta}.
    
    On the other hand, the image of the deck transformation $\sigma$ under the Prym representation \eqref{eq: Prym rep for prime p} is
    \[\text{Prym}(\sigma)=\bigl(I,\operatorname{diag}(\zeta_p,\cdots,\zeta_p)\bigr),\]
    and hence
    \[\Phi(\sigma)=(0,\zeta^{2g-2})= (0,1)\in \ZZ/p\ZZ\oplus \ZZ/p\ZZ,\]
    where the last equality follows from the assumption that $g \not\equiv 1 \pmod p$. 
    
    Hence $\Phi$ is surjective, and therefore
    \[H_1(\Mod(\widetilde{S},\sigma);\ZZ)\cong \ZZ/p\ZZ\oplus \ZZ/p\ZZ.\qedhere\]
\end{proof}

\bibliographystyle{alpha}
\bibliography{ref}

\end{document}